\newcommand{\R}{\mathbb{R}}
\newcommand{\E}{\mathbb{E}}
\newcommand{\1}{\mathbb{1}}
\renewcommand{\P}{\mathbb{P}}
\newcommand{\N}{\mathbb{N}}
\newcommand{\W}{\mathbb{W}}
\newcommand{\ce}{\mathbb E}
\newcommand{\ds}{\displaystyle}
\newtheorem{thank}{\ \ \ Acknowledgment}
\newcounter{tictac}
\def\1{\,\rlap{\mbox{\small\rm 1}}\kern.15em 1}
\def\build#1_#2^#3{\mathrel{\mathop{\kern 0pt#1}\limits_{#2}^{#3}}}
\def\tend#1#2{\build\hbox to 12mm{\rightarrowfill}_{#1\rightarrow #2}^{ }}
\def\tendn{\tend{n}{\infty}}
\def\converge#1#2#3#4{\build\hbox to
#1mm{\rightarrowfill}_{#2\rightarrow #3}^{\hbox{\scriptsize #4}}}
\theoremstyle{definition}
\newtheorem{thm}{Theorem}[section]
\newtheorem{prop}[thm]{Proposition}
\newtheorem{lemm}[thm]{Lemma}
\newtheorem{Prop}[thm]{Proposition}
\newtheorem{que}[thm]{Question}
\newtheorem{rems}[thm]{Remarks}
\newtheorem*{xrem}{Remark}
\newcommand{\Prod}{\mathop{\mathlarger{\mathlarger{\mathlarger{\prod}}}}}
\def\Pr{\mathbb P}
\newcommand{\beq}{\begin{equation}}
\newcommand{\eeq}{\end{equation}}
\begin{document}
\title{On the spectral type of some class of rank one flows}
\author{E.\ H. \ {El} Abdalaoui }
\address{ Universit\'e de Rouen-Math\'ematiques \\
  Labo. de Maths Raphael SALEM  UMR 60 85 CNRS\\
Avenue de l'Universit\'e, BP.12 \\
76801 Saint Etienne du Rouvray - France . \\}
\email{elhoucein.elabdalaoui@univ-rouen.fr}

\maketitle



{\renewcommand\abstractname{Abstract}
\begin{abstract}
It is shown that a certain class of Riesz product type measure on $\R$ is singular. This proves the 
singularity of the spectral types 
of some class of rank one flows. Our method is based on the extension of 
the Central Limit Theorem approach to the real line which gives a  
new extension of Salem-Zygmund Central Limit Theorem.  
\vspace{8cm}\\

\hspace{-0.7cm}{\em AMS Subject Classifications} (2010): 37A15, 37A25, 37A30, 42A05, 42A55.\\

{\em Key words and phrases:} 
Rank one flows, spectral type,  simple Lebesgue spectrum, singular spectrum, Salem-Zygmund Central Limit Theorem,
 Riesz products.\\
\end{abstract}
\thispagestyle{empty}
\newpage
\section{Introduction}\label{intro}
The purpose of this paper is to study the spectral type of some class of rank one flows. Rank one flows have simple spectrum and using a random Ornstein procedure \cite{Ornstein}, 
A. Prikhod'ko 
in \cite{prikhodko-orn} produce a family of mixing
rank one flows. It follows that the mixing 
rank one flows may possibly contain a candidate for the flow version of the 
Banach's well-known problem whether there exists a dynamical flow $(\Omega,{\mathcal
{A}},\mu,(T_t)_{t \in \R})$ with simple Lebesgue spectrum \footnote{Ulam in his book \cite[p.76]{Ulam} stated the Banach problem in the following form 
\begin{que}[Banach Problem]
Does there exist a square integrable function $f(x)$ and a measure preserving transformation $T(x)$, 
$-\infty<x<\infty$, such that the sequence of functions $\{f(T^n(x)); n=1,2,3,\cdots\}$ forms a complete 
orthogonal set in Hilbert space?
\end{que}}. In \cite{prikhodko}, 
A. Prikhod'ko introduced a class of rank one flows called {\it {exponential staircase rank one flows}} and state 
that in this class the answer to the flow version of Banach problem is affirmative. Unfortunately, as we shall 
establish, this is not the case since the spectrum of a large class of 
exponential staircase rank one flow is singular and this class contain a subclass of  
Prikhod'ko examples.

Our main tools are on one hand an extension to $\R$ of the 
CLT method (introduced in \cite{elabdaletds} for the torus) and on the other hand
the generalized Bourgain methods \cite{Bourgain} obtained in \cite{elabdal-archiv}
(in the context of the Riesz products on $\R$).

This allows us to get a new extension of the 
Salem-Zygmund CLT Theorem \cite{Zygmund} to the trigonometric sums 
with \textbf{real frequencies}.

Originally Salem-Zygmund CLT Theorem concerns the asymptotic stochastic behaviour 
of the lacunary trigonometric sums on the torus.
Since Salem-Zygmund pioneering result, the central limit theorem for
trigonometric sums has been intensively studied by many authors,  Erd\"os \cite{Erdos},
  J.P. Kahane \cite{Kahane}, J. Peyri\`ere \cite{Peyriere}, Berkers \cite{Berkes}, Murai \cite{Murai},
Takahashi \cite{Takahashi}, Fukuyama and  Takahashi
\cite{Fukuyama}, and many others. The same method is used to
study the asymptotic stochastic behaviour of Riesz-Raikov sums \cite{petit}.
Nevertheless all these results concern only the trigonometric sums on the torus.

Here we obtain the same result on $\R$.
The fundamental ingredient  in our proof is based on the famous 
Hermite-Lindemann Lemma  in the transcendental number theory \cite{Waldschmidt}.

Notice that the main argument used in the torus case \cite{elabdaletds} 
is based on the density of trigonometric polynomials. This argument cannot be applied here since
the density of trigonometric polynomials in $L^1(\R,\omega(t) dt)$ ($\omega$ is a positive function in $L^1(\R)$), 
is not verified unless $\omega$ satisfies some extra-condition. 
Nevertheless, using the density of the functions 
with compactly supported Fourier transforms, we are able to conclude.

The paper is organized as follows. In section 2, we review some standard facts from the
spectral theory of dynamical flows. In section 3, we recall the basic construction of the   
rank one flows obtained by the cutting and stacking method and we state our main result. In section 4, 
 we summarize and extend the relevant material on the
Bourgain criterion concerning the  singularity of the generalized Riesz products on $\R$.
In section 5, we develop the CLT method for trigonometric sums with real frequencies and 
we prove our main result concerning the singularity of a exponential 
staircase rank one flows.

\section{Basic facts from spectral theory of dynamical flows }\label{sec:1}
 A dynamical flow  is  a quadruplet $(X,\mathcal{A},\mu,(T_t)_{t \in \R})$
where $(X,\mathcal{A},\mu)$ is a Lebesgue probability space
 and $(T_t)_{t\in \R}$ is a measurable action of the group $\R$ by measure preserving transformations. (It means that 
 \begin{itemize}\item
 each $T_t$ is a bimeasurable invertible transformation of the probability space such that, for any
$A \in \mathcal{A}$, $\mu(T_t^{-1}A)=\mu(A)$,
\item for all $s,t\in\R$, $T_s\circ T_t = T_{s+t}$,
\item the map $(t,x)\mapsto T_t(x) 
$ is measurable from $\R\times X$ into $X$.)
\end{itemize}

Let us recall some classical definitions. A dynamical flow is \emph{ergodic} if
every measurable set which is invariant under all the maps $T_t$ either has measure zero or one. A number
$\lambda$ is an  \emph{eigenfrequency} if there exists nonzero function $f \in L^2(X)$ such that, for all $t\in\R$,
$f \circ T_t=e^{i \lambda t} f$. Such a function $f$ is called an  \emph{eigenfunction}.
An ergodic flow $(X,\mathcal{A},\mu,(T_t)_{t \in \R})$ is  \emph{weakly mixing} if every eigenfunction is constant (a.e.).
A flow $(X,\mathcal{A},\mu,(T_t)_{t \in \R})$ is  \emph{mixing} if for all $f,g \in L^2(X)$,
\[\bigintss f \circ T_t(x) \overline{g}(x) d\mu(x) \tend{|t|}{+\infty} \bigintss f(x) d\mu(x) \bigintss \overline{g}(x) d\mu(x).\]

Any dynamical flow $(T_t)_{t \in \R})$ induces an action of $\R$ by unitary operators acting on
$L^2(X)$ according to the formula  $U_{T_t}(f)=f \circ T_{-t}$. When there will be no ambiguity on the choice of the flow, we will denote $U_t=U_{T_t}$.

The \emph{spectral properties} of the flow are the property attached to the unitary representation associated to the flow. We recall below some classical facts; for details and references see \cite{CFS} or \cite{Handbook-1B-11}.

Two dynamical flows
$(X_1,\mathcal{A}_1,\mu_1,(T_t)_{t \in \R})$ and $ (X_2,\mathcal{A}_2,\mu_2,(S_t)_{t \in \R})$ are \emph{metrically isomorphic} if there exists a measurable map $\phi$ from $(X_1,\mathcal{A}_1,\mu_1)$ into $(X_2,\mathcal{A}_2,\mu_2)$, with the following properties:
\begin{itemize}
\item $\phi$ is one-to-one,
 \item For all $A \in \mathcal{A}_2$, $\mu_1(\phi^{-1}(A))=\mu_2(A).$
\item $S_t \circ \phi=\phi \circ T_t$, $\forall t \in \R$.
\end{itemize}
If two dynamical flows $(T_t)_{t \in \R}$ and $(S_t)_{t \in \R}$ are metrically isomorphic then the isomorphism $\phi$ induces
an isomorphism $V_{\phi}$ between the Hilbert spaces $L^2(X_2)$ and $L^2(X_1)$ which acts according to the formula
$V_{\phi}(f)=f \circ \phi$. In this case, since
$V_{\phi}U_{S_t}=U_{T_t}V_{\phi}$, the adjoint groups $(U_{T_t})$ and $(U_{S_t})$ are unitary equivalent. Thus
if two dynamical flows are metrically isomorphic then the corresponding adjoint groups of unitary operators are unitary
equivalent. It is well known that the converse statement is false \cite{CFS}.

By Bochner theorem, for any $f \in L^2(X)$, there exists a unique finite Borel measure $\sigma_f$ on $\R$ such that
\[\widehat{\sigma_f}(t)=\bigintss_{\R} e^{-it\xi}\ d\sigma_f(\xi)=\langle U_tf,f \rangle=
\bigintss_{X} f \circ T_t(x)\cdot \overline{f}(x) \ d\mu(x).\]
$\sigma_f$ is called the \emph{spectral measure} of $f$. If $f$ is eigenfunction with eigenfrequency $\lambda$ then
the spectral measure is the Dirac measure at $\lambda$.

The following fact derives directly from the definition of the spectral measure: let $(a_k)_{1\leq k\leq n}$ be complex numbers and $(t_k)_{1\leq k\leq n}$ be real numbers; consider $f\in L^2(X)$ and denote $F=\sum_{k=1}^n a_k \cdot f\circ T_{t_k}$. Then the spectral measure $\sigma_F$ is absolutely continuous with respect to the spectral measure $\sigma_f$ and
\begin{equation}\label{radon-spectral}
\frac{d\sigma_F}{d\sigma_f}(\xi)=\left|\sum_{k=1}^n a_k e^{it_k\xi}\right|^2.
\end{equation}

Here is another classical result concerning spectral measures : let $(g_n)$ be a sequence in $L^2(X)$, converging to $f\in L^2(X)$ ; then the sequence of real measures $(\sigma_{g_n}-\sigma_f)$ converges to zero in total variation norm.
\medskip

The \emph{maximal spectral type} of $(T_t)_{t \in \R}$ is the equivalence class of Borel
measures $\sigma$ on $\R$ (under the equivalence relation $\mu_1
\sim \mu_2$ if  and only if $\mu_1<<\mu_2$ and $\mu_2<<\mu_1$),
such that
 $\sigma_f<<\sigma$ for all $f\in L^2(X)$ and
if $\nu$ is another measure for which $\sigma_f<<\nu$
for all $f\in L^2(X)$ then $\sigma << \nu$.

The maximal spectral type is realized as the spectral measure of one function: there exists $h_1\in L^2(X)$
such that $\sigma_{h_1}$ is in the equivalence class defining the maximal spectral type of $(T_t)_{t \in \R}$.
By abuse of notation, we will call this measure the maximal
spectral type measure. 

The reduced maximal type $\sigma_0$ is the
maximal spectral type of ${(U_{t})}_{t \in \R}$ on $L_0^2(X)\stackrel{\rm
{def}}{=}\left\{f \in L^2(X)~:~ \displaystyle \bigintss f d\mu=0 \right\}$. The
spectrum of $(T_t)_{t \in \R}$ is said to be discrete (resp. continuous, resp.
singular, resp. absolutely continuous , resp. Lebesgue) if
$\sigma_0$ is discrete (resp. continuous, resp. singular with respect to Lebesgue measure, resp.
absolutely continuous with respect to Lebesgue measure).

The cyclic space of $h \in L^2(X)$ is
\[Z(h) \stackrel{\rm {def}}{=} \overline {{\rm {span}} \{U_{t}h\,:\,t \in \R \} }.
\]

There exists an orthogonal decomposition of $L^2(X)$ into cyclic spaces
\beq\label{rozkladsp} L^2(X)=\bigoplus_{i=1}^\infty Z(h_i),\;\;
\sigma_{h_1}\gg\sigma_{h_2}\gg\ldots \eeq Each
decomposition~(\ref{rozkladsp}) is be called a {\em spectral
decomposition} of $L^2(X)$ (while the sequence of measures is called a {\em
spectral sequence}). A spectral decomposition is unique up to equivalent class of
the spectral sequence. The spectral decomposition is determined
by the maximal spectral type and the
{\em multiplicity function}
$M:\R\to\{1,2,\ldots\}\cup\{+\infty\}$, which is
defined $\sigma_{h_1}$-a.e. by $ M(s)=\sum_{i=1}^\infty
1_{Y_i}(s)$, where $Y_1=\R$ and
$Y_i=supp\,\frac{d\sigma_{x_i}}{d\sigma_{x_1}}$ for $i\geq2$.

The flow has {\em simple spectrum} if
$1$ is the only essential value of $M$. The multiplicity is
{\em homogeneous} if there is only one essential value of $M$.
The essential supremum of $M$ is called the {\em maximal
spectral multiplicity}.

Von Neumann showed that the flow ${(T_t)_{t \in \R}}$ has homogeneous Lebesgue spectrum if and only if the
associated group of unitary operators ${(U_{t})_{t \in \R}}$ satisfy the Weyl commutation relations for some
one-parameter group $(V_t)_{t \in \R}$ i.e.
\[
U_{t} V_s=e^{-ist}V_sU_{t},~~~~~~~~~s,t \in \R,
\]
where $e^{-ist}$ denotes the operator of multiplication by $e^{-ist}$.\\ It is easy to show that
the Weyl commutation relations implies that the  maximal spectral type is
invariant with respect to the translations. 
The proof of von Neumann homogeneous Lebesgue spectrum theorem can be found in \cite{CFS}.

\section{Rank one flows by Cutting and Stacking method}\label{CSC}

Several approach of the notion of rank one flow have been proposed in the literature. The notion of \emph{approximation of a flow by periodic transformations} has been introduced by Katok and Stepin in \cite{Kat-Step} (see Chapter 15 of \cite{CFS}). This was the first attempt of a definition of a rank one flow. 

In \cite{Deljunco-Park}, del Junco and Park adapted the classical Chacon construction \cite{lebonchacon} to produce
similar construction for a flow. The flow obtain by this method is called the Chacon flow.

This cutting and stacking construction has been extended by Zeitz (\cite{zeitz}) in order to give a general definition of a rank one flow. In the present paper we follow this cutting and stacking (CS) approach and we recall it now.
We assume that the reader is familiar with the CS construction of a rank one map acting on certain
measure space which may be finite or $\sigma$-finite. A nice account may be founded in~\cite{Friedman}.

Let us fix a sequence $(p_n)_{n\in\N}$ of integers $\geq2$ and a sequence of finite sequences of non-negative real numbers $\left({(s_{n,j})}_{j=1}^{p_{n-1}}\right)_{n>0}$.

Let ${\overline{B_0}}$ be a rectangle of height $1$ with horizontal base $B_0$. At stage one divide $B_0$ into $p_0$ equal 
parts $(A_{1,j})_{j=1}^{p_0}$. Let $(\overline{A}_{1,j})_{j=1}^{p_0}$ denotes the flow towers over
$(A_{1,j})_{j=1}^{p_0}$. In order to construct the second flow tower,
put over each tower $\overline{A}_{1,j}$ a rectangle spacer of height $s_{1,j}$ (and base of same measure as $A_{1,j}$) and form a stack of height $h_{1}=p_0 +\sum_{j=1}^{p_0}s_{1,j}$ in the usual
fashion. Call this second tower $\overline{B_1}$, with $B_1=A_{1,1}$.

At the $k^{th}$ stage, divide the base $B_{k-1}$
of the tower ${\overline{B}_{k-1}}$ into $p_{k-1}$ subsets $(A_{k,j})_{j=1}^{p_{k-1}}$ of equal measure.
Let $(\overline{A}_{k,j})_{j=1}^{p_{k-1}}$ be the towers over
$(A_{k,j})_{j=1}^{p_{k-1}}$ respectively. Above each tower $\overline{A}_{k,j}$, put a rectangle spacer of height $s_{k,j}$ (and base of same measure as $A_{k,j}$). Then form a stack of height $h_{k} = p_{k-1}h_{k-1} + \sum_{j=1}^{p_{k-1}}s_{k,j}$ in the usual
fashion. The new base is $B_k=A_{k,1}$ and the new tower is $\overline{B_k}$.

All the rectangles are equipped with Lebesgue two-dimensional measure that will be denoted by $\nu$. Proceeding this way we construct what we call a rank one flow
${(T_t)_{t \in \R}}$ acting on a certain measure space $(X,{\mathcal B} ,\nu)$ which may
be finite or $\sigma-$finite depending on the number of spacers added at each stage. \\
This rank one flow will be denoted by

$$(T^t)_{t \in \R} \stackrel{\text{def}}{=}  
\left(T^t_{(p_n, (s_{n+1,j})_{j=1}^{p_{n}})_{n\geq0}}\right)_{t \in \R}$$

The invariant measure $\nu$ will be finite if and only if 
$$\displaystyle \sum_{k=0}^{+\infty}
\frac{\sum_{j=1}^{p_{k}} s_{k+1,j}}{p_kh_k}<+\infty.$$ 
\noindent{}In that case, the measure will be normalized in order to have a probability.

\begin{rems} The only thing we use from  \cite{zeitz} is the definition of rank one flows. Actually
a careful reading of Zeitz paper \cite{zeitz} shows that the
author assumes that for any rank one flow there exists always at least one time $t_0$ such that 
$T_{t_0}$ has rank one property. But, it turns out that this is not the case in general as proved by 
Ryzhikhov in 
\cite{RyzhikovWCT}. Furthermore,
if this property was  satisfied 
then the weak closure theorem for flows would hold as a direct consequence of the King weak closure theorem
($C(T_{t}) \subset C(t_0) \stackrel{\rm {J.King}}{=}WCT(T_{t_0})\subset WCT({T_t}) \subset C(T_{t})$, 
where $C(t_0)$ is the centralizer of $T_{t_0}$ and 
$WCT(T_{t_0})$ is the weak closure of $T_{t_0}$).
\end{rems}

\subsection{Exponential staircase rank one flows of type I}
The main issue of this note is to study 
the spectrum of a subclass of a rank one flows called 
{\it exponential staircase rank one flows of type I} which are defined as follows.

Let $(m_n,p_n)_{n \in \N}$ be a sequence of positive integers such that 
$m_n$ and $p_n$ goes to infinity as $n$ goes to infinity. Let $\varepsilon_n$ be a sequence of 
rationals numbers which converge to $0$. Put
$$ 
\omega_n(p)=\frac{m_n}{\varepsilon_n^2}p_n\Big(\exp\big(\frac{\varepsilon_n.p}{p_n}\big)-1\Big)
{\rm {~~for~any~~}} p \in \{0,\cdots,p_n-1\},
$$   
and define the sequence of the spacers $((s_{n+1,p})_{p=0,\cdots,p_n-1})_{n \geq 0}$ by
$$
 h_n+s_{n+1,p+1}=\omega_n(p+1)-\omega_n(p),~~~p=0,\cdots,p_n-1, n \in \N.
$$
In this definition we assume that
\begin{enumerate}
 \item $m_n \geq \varepsilon_n.h_n$, for any $n \in \N$.
\item $\displaystyle \frac{\log(p_n)}{m_n} \tendn 0$ if $p_n \geq \frac{m_n}{\varepsilon_n}$
\item $\displaystyle \frac{\log(p_n)}{m_n} \tendn 0$ and  $\displaystyle \frac{\log(p_n)}{p_n} \leq \varepsilon_n$ 
if $p_n < \frac{m_n}{\epsilon_n}$
\end{enumerate}
We will denote  this class of rank one flow by

$$(T^t)_{t \in \R} \stackrel{\text{def}}{=}  
\left(T^t_{(p_n,\omega_n)_{n\geq0}}\right)_{t \in \R}.$$
It is easy to see that this class of flows contain a large class of examples introduced by Prikhodko \cite{prikhodko}. Indeed,
assume that $h_n^{\beta} \geq p_n \geq h_n^{1+\alpha}$, $\beta \geq 2$ and $\alpha \in ]0,\frac14[$. Then, by  assumption 
(1), we have
$$\frac{\log(p_n)}{m_n} \leq \beta \frac{\log(h_n)}{\varepsilon_n h_n},$$
Taking $\beta=\varepsilon_n \big(\lfloor h_n^{\delta} \rfloor+1\big)$, $0<\delta<1$, we get 
$$
\frac{\log(p_n)}{m_n}  \tendn 0.
$$   
 
In \cite{elabdal-archiv}  it is proved that
the spectral type of of any rank one flow $(T^t_{(p_n, (s_{n+1,j})_{j=1}^{p_{n}})_{n\geq0}})_{t \in \R}$ is given by some kind of Riesz-product measure on $\R$. 
To be more precise, the authors  in \cite{elabdal-archiv} proved the following theorem 

\begin{thm}[Maximal spectral type of rank one flows]\label{max-spectr-typ}
For any $s\in(0,1]$, the spectral measure $\sigma_{0,s}$ is the weak
 limit of the sequence of probability measures 
$$\prod_{k=0}^{n}|P_k(\theta)|^2 K_s(\theta)\,d\theta,$$
\noindent{} where
\begin{eqnarray*}
&&P_k(\theta)=\frac 1{\sqrt{p_k}}\left(
\sum_{j=0}^{p_k-1}e^{{i\theta(jh_k+\bar s_{k}(j))}}\right),~~\bar s_{k}(j)=\sum_{i=1}^js_{k+1,i}, 
~\bar s_{k}(0)=0 \nonumber.  \\
\nonumber
\end{eqnarray*}
\noindent{} and
$$K_s(\theta)=\frac{s}{2\pi}\cdot{\left(\frac{\sin(\frac{s\theta}2)}{\frac{s\theta}2}\right)^2}.$$

In addition the continuous part of spectral type of the rank one flow is equivalent 
to the continuous part of $\ds \sum_{k \geq 1} 2^{-k}\sigma_{0,\frac{1}{k}}.$
\end{thm}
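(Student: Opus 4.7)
The plan is to identify $\sigma_{0,s}$ as the normalised spectral measure of the slab indicator $f_s := \mathbf{1}_{B_0\times[0,s]} \in L^2(X)$ (so that $\sigma_{0,s} = \sigma_{f_s}/(s|B_0|)$ is a probability on $\R$) and to prove the stated weak convergence by combining an exact stage-$n$ identity for $\sigma_{f_s}$ with a Fourier analysis of a thinner slab.

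First I would exploit the cutting-and-stacking structure: inside $\overline{B_n}$, the set $B_0\times[0,s]$ decomposes as the disjoint union of the $N_n := p_0 \cdots p_{n-1}$ base slabs of the copies of $\overline{B_0}$, sitting at heights $\tau_J := \sum_{k=0}^{n-1}(j_k h_k + \bar{s}_k(j_k))$ for $J=(j_0,\ldots,j_{n-1})\in \prod_{k=0}^{n-1}\{0,\ldots,p_k-1\}$. Setting $g_n := \mathbf{1}_{B_n\times[0,s]}$, this yields the $L^2$-identity $f_s = \sum_{J} g_n\circ T_{-\tau_J}$. Formula~(\ref{radon-spectral}), together with the factorisation $\sum_J e^{-i\tau_J \xi} = \prod_{k=0}^{n-1}\sqrt{p_k}\,\overline{P_k(\xi)}$, then produces the exact stage-$n$ identity
\[
d\sigma_{f_s}(\xi) \;=\; N_n\prod_{k=0}^{n-1}|P_k(\xi)|^2\,d\sigma_{g_n}(\xi),
\]
reducing matters to understanding the asymptotic shape of $\sigma_{g_n}$.

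A direct overlap computation gives $\hat\sigma_{g_n}(t) = \nu\bigl(B_n\times[0,s]\cap T_t(B_n\times[0,s])\bigr) = |B_n|(s-|t|)^+$ for every $t$ in an interval $[-R_n,R_n]$ that grows with $n$ (outside of which secondary ``return bumps'' appear, coming from the flow's first return to the bottom slab at time of order $h_n$). Since $|B_n|(s-|t|)^+$ is also the Fourier transform of the Fej\'er-type density $s|B_n|K_s(\xi)$, L\'evy's continuity theorem gives the weak convergence of the probability measures $\sigma_{g_n}/(s|B_n|)$ to $K_s(\xi)\,d\xi$. Substituted into the stage-$n$ identity and using $N_n|B_n| = |B_0|$, this yields the heuristic asymptotics $\sigma_{f_s}\simeq s|B_0|\prod_{k=0}^{n-1}|P_k(\xi)|^2 K_s(\xi)\,d\xi$ as $n\to\infty$.

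The main obstacle, and the technical heart of the proof, is to upgrade this heuristic to an actual weak convergence, given that the pre-factor $\prod_{k=0}^{n-1}|P_k|^2$ depends on $n$ in parallel with $\sigma_{g_n}$. The strategy, as flagged in the introduction, is to test against $\phi\in C_c(\R)$ with compactly supported Fourier transform (such $\phi$ being dense in $C_c(\R)$); Parseval then converts the discrepancy $\int\phi\,\prod|P_k|^2\,d\bigl(\sigma_{g_n}/(s|B_n|) - K_s\,d\xi\bigr)$ into a Fourier-side integral in which the support of $\widehat{\phi\prod|P_k|^2}$ (confined to $[-R-2h_n,R+2h_n]$) must be shown to avoid the support of the Fourier-side error (located outside $[-R_n,R_n]$). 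Controlling this separation via the geometry of the rank one construction is exactly where care is required. Once it is in place, L\'evy's continuity theorem delivers the claimed weak limit for $\sigma_{0,s}$, and the final assertion about the continuous part of the maximal spectral type follows because $\{f_{1/k}\}_{k\ge 1}$ is a total family in $L^2_0(X)$, so that the reduced maximal spectral type is represented, up to equivalence, by a convex combination of the $\sigma_{0,1/k}$'s.
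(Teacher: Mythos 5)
First, a point of reference: the paper does not actually prove Theorem~\ref{max-spectr-typ}; it imports it from \cite{elabdal-archiv} ("the authors in \cite{elabdal-archiv} proved the following theorem"), so there is no in-paper proof to compare against. Judged on its own terms, your outline is the standard (and surely the intended) route: write $f_s=\1_{B_0\times[0,s]}$ as $\sum_J U_{\tau_J}g_n$ with $g_n=\1_{B_n\times[0,s]}$ using the tower structure, apply~(\ref{radon-spectral}) and the factorisation $\sum_J e^{-i\tau_J\xi}=\prod_{k=0}^{n-1}\sqrt{p_k}\,\overline{P_k(\xi)}$ to get the exact identity $d\sigma_{f_s}=N_n\prod_{k<n}|P_k|^2\,d\sigma_{g_n}$, observe that $\widehat{\sigma_{g_n}}(t)=|B_n|(s-|t|)^+$ until the first return time ($\approx h_n$) so that $\sigma_{g_n}/(s|B_n|)\to K_s\,d\xi$ weakly, and then pass to the limit by testing against $\phi$ with $\widehat{\phi}$ compactly supported. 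All of these identities check out, and you correctly isolate the limit-interchange as the technical heart.

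However, the mechanism you propose for that last step does not work as stated. You want the Fourier support of $\phi\prod_{k<n}|P_k|^2$, which reaches out to $\pm(R+h_n)$ (the frequencies of $\prod_{k<n}|P_k|^2$ are the differences $\tau_J-\tau_{J'}\in[-(h_n-1),h_n-1]$), to \emph{avoid} the support of $\widehat{\rho_n}$ where $\rho_n=\sigma_{g_n}/(s|B_n|)-K_s\,d\xi$, which is contained in $\{|t|\ge h_n-s\}$. Since $R+h_n>h_n-s$, these supports always overlap, so the error term cannot be killed by disjointness of supports. What saves the argument is a counting estimate, not a separation: writing $\int\phi\prod_{k<n}|P_k|^2\,d\rho_n=\frac1{N_n}\sum_{J,J'}(\widehat{\phi}\ast\widehat{\rho_n})(\tau_{J'}-\tau_J)$ (up to normalisation), only the pairs with $|\tau_J-\tau_{J'}|\ge h_n-s-R$ contribute; because consecutive occurrence times $\tau_J$ of $B_0$ in the $n$-th tower are separated by at least $h_0=1$, there are only $O_R(1)$ such pairs, each contributing a uniformly bounded amount, so the whole sum is $O_R(1/N_n)\to0$. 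With that replacement the proof closes. Two smaller remarks: the family $\{f_{1/k}\}$ is not contained in $L^2_0(X)$ (these functions have nonzero mean), so the last paragraph should instead argue that the cyclic spaces $Z(f_{1/k})$ together span $L^2(X)$ (tower levels generate the $\sigma$-algebra), whence the maximal spectral type on $L^2(X)$ is $\sum_k2^{-k}\sigma_{0,1/k}$ up to equivalence and the reduced type differs only by the atom at $0$; and one should note tightness (or the continuity at $0$ of the limiting Fourier transform) before invoking L\'evy's theorem for the measures $\prod_{k\le n}|P_k|^2K_s\,d\theta$.
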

The theorem above gives a new generalization of Choksi-Nadkarni Theorem \cite{Nadkarni1}, \cite{Nadkarni4}. 
We point out that in \cite{elabdal}, the author generalized the Choksi-Nadkarni Theorem 
to the case of funny rank one group actions for which the group is compact and Abelian.\\
 
We end this section by stating our main result.
\begin{thm}\label{main}
Let $(T^t)_{t \in \R}=
\left(T^t_{(p_n,\omega_n)_{n\geq0}}\right)_{t \in \R}$
 be a exponential staircase rank one flow of type I
associated to  
\begin{eqnarray}
\nonumber \omega_n(p)=\frac{m_n}{\varepsilon_n^2}p_n \exp\big({\frac{\varepsilon_n.p}{p_n}}\big), 
~~p=0,\cdots,p_n-1. 
\end{eqnarray}  Then the spectrum of $(T_t)_{\R}$ is singular.
\end{thm}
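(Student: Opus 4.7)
The plan is to combine Theorem~\ref{max-spectr-typ} with the generalized Bourgain singularity criterion on $\R$ from \cite{elabdal-archiv} and a new Salem--Zygmund-type CLT adapted to trigonometric sums with real frequencies. By Theorem~\ref{max-spectr-typ}, proving singularity of the spectral type of $(T^t)_{t\in\R}$ reduces, for each $s\in(0,1]$, to proving singularity with respect to Lebesgue measure of the weak limit $\sigma_{0,s}$ of $d\mu_n=\prod_{k=0}^n|P_k(\theta)|^2\,K_s(\theta)\,d\theta$. The Bourgain criterion on $\R$ turns this into an asymptotic vanishing statement of the form $\int\prod_{k\in\Lambda_n}|P_k(\theta)|\,K_s(\theta)\,d\theta\to 0$ along suitable index sets $\Lambda_n\nearrow\N$; equivalently, the sequence $\prod_{k\le n}|P_k|$ must fail to be uniformly integrable against $K_s\,d\theta$.

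The central step is a CLT asserting that, under the probability measure $K_s(\theta)\,d\theta$ (or tested against a suitable dense family of test functions), the normalized sum $P_k(\theta)=\frac{1}{\sqrt{p_k}}\sum_{j=0}^{p_k-1}e^{i\theta\omega_k(j)}$ converges in distribution to a standard complex Gaussian as $k\to\infty$, with the $P_k$'s at different scales becoming asymptotically independent. I would prove this by the method of moments: a mixed moment $\int P_k^a\,\overline{P_k}^b\,K_s\,d\theta$ expands as a sum of values of $\widehat{K_s}$ evaluated at frequency combinations of the form $\omega_k(j_1)+\dots+\omega_k(j_a)-\omega_k(j_1')-\dots-\omega_k(j_b')$. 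Since $\widehat{K_s}$ is supported in $[-s,s]$, only combinations very close to $0$ can contribute. The crucial algebraic input is the Hermite--Lindemann (Lindemann--Weierstrass) theorem: as the $\omega_k(j)$ are a common rational multiple of the numbers $e^{\varepsilon_k j/p_k}$ attached to distinct rationals, they are $\mathbb{Q}$-linearly independent, so the only signed sums that vanish exactly come from the diagonal matching of $\{j_i\}$ with $\{j_i'\}$. A quantitative lower bound on the nontrivial combinations, governed by the growth conditions on $(m_n,p_n,\varepsilon_n)$, will force the off-diagonal terms to decay, leaving the Gaussian moments $a!\,\delta_{ab}$. Joint asymptotic independence across scales $k_1<\dots<k_r$ follows in the same way, using $\mathbb{Q}$-linear independence of the combined frequency systems.

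Once the CLT is established, $|P_k|^2$ is asymptotically exponential of mean one and $|P_k|$ is asymptotically Rayleigh, so $\int\prod_{k\in\Lambda}|P_k|\,K_s\,d\theta$ factorizes in the limit as $\prod_{k\in\Lambda}\bigl(\sqrt{\pi}/2+o(1)\bigr)$, which tends to $0$ as $|\Lambda|\to\infty$. The generalized Bourgain criterion then yields singularity of $\sigma_{0,s}$ for every $s\in(0,1]$, and hence of the maximal spectral type of $(T^t)_{t\in\R}$. The main obstacle is the CLT itself: trigonometric polynomials are \emph{not} dense in $L^1(\R,K_s d\theta)$, so the classical torus approximation scheme breaks down and must be replaced by a density argument based on functions with compactly supported Fourier transform, as signalled in the introduction. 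Simultaneously, one must quantify the $\mathbb{Q}$-linear independence provided by Hermite--Lindemann in a form strong enough to control the off-diagonal moment contributions over the very long sequences of enormous real frequencies $\omega_k(j)$ produced by the exponential staircase construction; coordinating these two ingredients against the arithmetic conditions (1)--(3) imposed on $(m_n,p_n,\varepsilon_n)$ is the real technical hurdle.
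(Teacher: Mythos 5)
Your overall architecture matches the paper's: the Riesz--product representation of the spectral type (Theorem \ref{max-spectr-typ}), the Bourgain criterion on $\R$ (Theorem \ref{Bourg-cri}), a Salem--Zygmund-type CLT for the real-frequency sums $P_k$, the Hermite--Lindemann input to separate words, and the replacement of trigonometric-polynomial density by density of functions with compactly supported Fourier transform. Where you genuinely diverge is in how the CLT is converted into singularity. The paper never proves (and never needs) joint asymptotic independence of $P_{k_1},\dots,P_{k_r}$ across scales: it proves a single-scale CLT for $\Re P_m$ on arbitrary Borel sets by the McLeish product method (Theorem \ref{Gauss}), deduces that any weak limit $\phi$ of $\bigl|\,|P_m|^2-1\bigr|$ satisfies $\int_A\phi\,d\mu_s\ge K\mu_s(A)$ (Propositions \ref{w-lim} and \ref{key-sasha}), and then feeds this single-scale lower bound into the deterministic inequality of Lemma \ref{limsup} via Proposition \ref{CDsuffit} to force the Bourgain infimum to vanish. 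Your route instead asks for the full joint CLT and the factorization $\int\prod_{k\in\Lambda}|P_k|\,d\mu_s\to(\sqrt{\pi}/2)^{|\Lambda|}$. That buys a cleaner quantitative conclusion, but the assertion that joint independence ``follows in the same way'' is the weak point: cross-scale words $\sum_i\sum_j\eta_{ij}\,\omega_{k_i}(j)$ combine powers of \emph{different} transcendentals $e^{\varepsilon_{k_i}/p_{k_i}}$ with different prefactors $m_{k_i}p_{k_i}/\varepsilon_{k_i}^2$; ruling out exact cancellation requires the full Lindemann--Weierstrass theorem (not merely Hermite--Lindemann applied to a single transcendental base, as in the paper's single-scale lemma) after checking that the rational exponents $\varepsilon_{k_i}j/p_{k_i}$ do not collide across scales, and the quantitative word-counting that occupies most of the paper's Section 5 would have to be redone for these mixed words. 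The paper's detour through Lemma \ref{limsup} exists precisely to avoid this escalation.

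A second, smaller caveat: even at a single scale it is not true that every off-diagonal frequency combination escapes every compact set. For instance, length-$4$ words with $\alpha_1\alpha_2\le\Omega\,p_n/m_n$ remain bounded, and the paper must count them (at most $\Omega\,(p_n/m_n)\log p_n$) and beat that count with the coefficient bound $|\rho_w^{(n)}(x)|\le 2^{1-r}|x|^r p_n^{-r/2}$ under hypotheses (1)--(3). So the mechanism is a count-versus-weight tradeoff rather than a uniform lower bound on all nontrivial combinations; you correctly flag this quantitative step as the real hurdle, but be aware that it cannot be reduced to ``nontrivial combinations are large.''
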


\section{ On the Bourgain singularity criterium of generalized Riesz products on $\R$ }
In this section, for the convenience of the reader we repeat the relevant material from \cite{elabdal-archiv}
without proofs, thus making our exposition self-contained. 
Let us fix $s\in(0,1)$ and denote by $\mu_s$ the probability measure of density $K_s$ on $\R$, that is, 
$$
d\mu_s(\theta)=K_s(\theta)\,d\theta
$$
We denote by $\sigma$ the spectral measure of $\1_{\overline{B_{0,s}}}$ given as the weak limit of the following
generalized Riesz products
\begin{eqnarray}\label{sptrkone}
d\sigma={\rm {W-}}\lim_{N\to+\infty}\prod_{k=1}^{N}|P_k|^2 d\mu_s,
\end{eqnarray}
\noindent{} where
\begin{eqnarray*}
&&P_k(\theta)=\frac 1{\sqrt{p_k}}\left(
\sum_{j=0}^{p_k-1}e^{{i\theta(jh_k+\bar s_{k}(j))}}\right),~~\bar s_{k}(j)=\sum_{i=1}^js_{k+1,i}, 
~\bar s_{k}(0)=0 \nonumber.  \\
\nonumber
\end{eqnarray*}
Let us recall the following Bourgain criterion established in \cite{elabdal-archiv}. 
\begin{thm}[Bourgain criterion]\label{Bourg-cri}
The following are equivalent
\begin{enumerate}
\item [(i)]  $\sigma$ is singular with respect to Lebesgue measure.

\item[(ii)]  {\it $\inf \left \{\displaystyle  \displaystyle \bigintss_\R
\prod_{\ell=1}^L\left| {P_{n_\ell}}\right|\; d\mu_s\;:\; L\in
{\N},~n_1<n_2<\ldots <n_L\right\}=0.$ }
\end{enumerate}
\end{thm}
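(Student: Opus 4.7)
The plan is to prove the equivalence as a Bourgain-type dichotomy, by tying the ``subproduct integral'' in (ii) to the mass of the absolutely continuous part of $\sigma$ with respect to $\mu_s$ (equivalently, with respect to Lebesgue measure, since $K_s>0$ everywhere on $\R$). Throughout, write $R_N:=\prod_{k=1}^N |P_k|$ and $Q_N:=R_N^2$, so that $Q_N\,d\mu_s$ converges weakly to $\sigma$ and $\int Q_N\,d\mu_s=1$.

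Direction $(i)\Rightarrow(ii)$: Assume $\sigma\perp\mu_s$. By outer regularity, for each $\varepsilon>0$ choose an open set $U_\varepsilon\subset\R$ with $\mu_s(U_\varepsilon)<\varepsilon$ and $\sigma(\R\setminus U_\varepsilon)<\varepsilon$. Weak convergence plus the Portmanteau theorem on the closed set $\R\setminus U_\varepsilon$ yields $\limsup_N\int_{\R\setminus U_\varepsilon}Q_N\,d\mu_s\leq\varepsilon$. Splitting $\int R_N\,d\mu_s$ over $U_\varepsilon$ and its complement and applying Cauchy--Schwarz,
$$\int_{U_\varepsilon}R_N\,d\mu_s\leq\mu_s(U_\varepsilon)^{1/2}\leq\sqrt\varepsilon,\qquad\int_{\R\setminus U_\varepsilon}R_N\,d\mu_s\leq\Bigl(\int_{\R\setminus U_\varepsilon}Q_N\,d\mu_s\Bigr)^{1/2}\leq\sqrt{2\varepsilon}$$
for $N$ large. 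Hence $\int R_N\,d\mu_s\to 0$, and taking $n_\ell=\ell$ with $L=N$ realises the infimum in (ii).

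Direction $(ii)\Rightarrow(i)$: The goal is the key lower bound
$$\int\sqrt{f}\,d\mu_s\leq\int\prod_{\ell=1}^L|P_{n_\ell}|\,d\mu_s\qquad\text{for all }L\geq 1,\ n_1<\cdots<n_L,$$
where $f:=d\sigma_{ac}/d\mu_s$. Granted this, (ii) forces $\int\sqrt f\,d\mu_s=0$, whence $f\equiv 0$ and $\sigma$ is singular. I would prove the bound in two steps. \emph{Step A (orthogonality monotonicity).} Given $F=\{n_1,\dots,n_L\}$, pick $M\geq n_L$ and set $G:=\{1,\dots,M\}\setminus F$. Cauchy--Schwarz applied to the factorisation $R_M=\sqrt{\prod_{n\in F}|P_n|}\cdot\bigl(\sqrt{\prod_{n\in F}|P_n|}\prod_{n\in G}|P_n|\bigr)$ gives
$$\Bigl(\int R_M\,d\mu_s\Bigr)^2\leq\int\prod_{n\in F}|P_n|\,d\mu_s\cdot\int\prod_{n\in F}|P_n|\prod_{n\in G}|P_n|^2\,d\mu_s.$$
The essential input is the approximate orthogonality $\int\bigl(\prod_{n\in G}|P_n|^2\bigr)h\,d\mu_s=\int h\,d\mu_s$ whenever the Fourier spectrum of $h$ is separated from the nonzero frequencies in the expansion of $\prod_{n\in G}|P_n|^2-1$; since $\widehat{K_s}$ is supported in $[-s,s]$, this reduces to a lacunarity/dissociativity condition on $\{jh_k+\bar s_k(j)\}$ guaranteed by the construction. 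Applied with $h=\prod_{n\in F}|P_n|$, it collapses the right-hand factor to $\int\prod_{n\in F}|P_n|\,d\mu_s$, yielding $\int R_M\,d\mu_s\leq\int\prod_{n\in F}|P_n|\,d\mu_s$.

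\emph{Step B (a.e.\ convergence and Fatou).} The same orthogonality underpins a martingale-type identification $R_M^2\to f$ $\mu_s$-a.e., via the classical Brown--Moran/Peyri\`ere argument adapted to generalised Riesz products on $\R$; the singular part of $\sigma$ is concentrated where $R_M^2\to 0$ and plays no role in Fatou. Hence $R_M\to\sqrt f$ $\mu_s$-a.e., and Fatou's lemma gives $\liminf_M\int R_M\,d\mu_s\geq\int\sqrt f\,d\mu_s$. Combining with Step A produces the desired inequality, completing $(ii)\Rightarrow(i)$. The main obstacle is precisely the orthogonality verification in Step A: on the torus, character orthogonality is exact, whereas on $\R$ one has to exploit the compact Fourier support of $K_s$ together with a sharp diophantine separation statement for the frequencies $\{jh_k+\bar s_k(j)\}$. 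Carrying this through, together with the accompanying a.e.\ convergence, is the genuine technical core inherited from Bourgain's original argument and developed in the $\R$-setting in \cite{elabdal-archiv}.
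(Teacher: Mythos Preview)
The paper does not prove this theorem at all: Section~4 explicitly states that the material is ``repeated from \cite{elabdal-archiv} without proofs,'' and Theorem~\ref{Bourg-cri} is simply quoted. So there is no proof in the paper to compare against; your sketch is being measured against the argument in \cite{elabdal-archiv} (which extends Bourgain's original torus argument to $\R$).

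Your direction $(i)\Rightarrow(ii)$ is correct and standard.

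In $(ii)\Rightarrow(i)$ your overall target $\int\sqrt f\,d\mu_s\le\int\prod_{\ell}|P_{n_\ell}|\,d\mu_s$ is the right one, but Step~A contains a real gap. You invoke the orthogonality relation $\int(\prod_{n\in G}|P_n|^2)h\,d\mu_s=\int h\,d\mu_s$ with $h=\prod_{n\in F}|P_n|$. That relation is valid only when the (Bohr) Fourier spectrum of $h$ stays at distance $>s$ from every nonzero frequency of $\prod_{G}|P_n|^2-1$, because $\widehat{K_s}$ has support $[-s,s]$. But $\prod_{F}|P_n|$ is the \emph{modulus} of a trigonometric polynomial, not a polynomial: its Bohr spectrum is the full $\Z$-module generated by the difference set $\{\omega_n(j)-\omega_n(j')\,:\,n\in F\}$, which is unbounded. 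No dissociation hypothesis on the frequencies $\{jh_k+\bar s_k(j)\}$ prevents elements of this module from landing within distance $s$ of the nonzero frequencies coming from $G$ (and recall $G=\{1,\dots,M\}\setminus F$ also contains indices \emph{below} $\min F$). So the collapse of the second factor to $\int\prod_F|P_n|\,d\mu_s$ is unjustified, and with it the monotonicity $\int R_M\,d\mu_s\le\int\prod_F|P_n|\,d\mu_s$.

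The route that actually works avoids this step entirely. One shows (as in \cite{elabdal-archiv}) that for each finite $F$ the partial product $\prod_{n\le M,\,n\notin F}|P_n|^2\,d\mu_s$ converges weakly to a probability measure $\sigma^{F}$, and that $\sigma=\prod_{n\in F}|P_n|^2\cdot\sigma^{F}$ (this identity only needs that $\prod_F|P_n|^2$ is bounded continuous). Writing $g^F=d\sigma^{F}_{ac}/d\mu_s$ one has $f=\prod_F|P_n|^2\,g^F$, hence two applications of Cauchy--Schwarz give
\[
\Bigl(\int\sqrt f\,d\mu_s\Bigr)^2=\Bigl(\int\prod_F|P_n|\sqrt{g^F}\,d\mu_s\Bigr)^2\le\int\prod_F|P_n|\,d\mu_s\cdot\int\prod_F|P_n|\,g^F\,d\mu_s\le\int\prod_F|P_n|\,d\mu_s\cdot\int\prod_F|P_n|\,d\sigma^F,
\]
and $\int\prod_F|P_n|\,d\sigma^F\le(\int\prod_F|P_n|^2\,d\sigma^F)^{1/2}=\sigma(\R)^{1/2}=1$. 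Thus $(\int\sqrt f\,d\mu_s)^2\le\int\prod_F|P_n|\,d\mu_s$ for every $F$, and the infimum being $0$ forces $f=0$. This bypasses both your Step~A orthogonality claim and the a.e.\ convergence $R_M^2\to f$ of Step~B; neither is needed. What \emph{is} needed---and is the genuine $\R$-specific content of \cite{elabdal-archiv}---is the existence of $\sigma^F$ as a probability measure, which comes from the dissociation of the frequencies relative to the window $[-s,s]$, applied to the polynomial $\prod|P_n|^2$ rather than to its square root.
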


As noted in \cite{elabdal-archiv}  
to prove the singularity of the spectrum of the rank one flow it is sufficient to prove that a 
weak limit point of the sequence $\left(\left||P_m|^2-1\right|\right)$ is bounded by below 
by a positive constant. More precisely, the authors in \cite{elabdal-archiv} established the following
proposition. 

\begin{Prop}\label{CDsuffit} Let $E$ be an infinite set of positive integers. 
Suppose that there exists a constant $c>0$ such that, for any positive function $\phi \in L^2(\R,\mu_s)$, 
$$\liminf_{\overset{m\longrightarrow +\infty}{m\in E}} \bigintss_{\R}\phi\left||P_m|^2-1\right|
\;d\mu_s \geq c \bigintss_{\R}\phi \;d\mu_s.$$
Then $\sigma$ is singular.
\end{Prop}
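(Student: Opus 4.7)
The plan is to deduce the singularity of $\sigma$ from Bourgain's criterion (Theorem~\ref{Bourg-cri}) by constructing inductively an increasing sequence $(n_\ell)_{\ell\geq1}\subset E$ along which the products decay geometrically. Concretely, I aim for
\begin{equation*}
\int_\R \prod_{\ell=1}^L|P_{n_\ell}|\,d\mu_s \,\leq\, (1-\eta)^L,\qquad L\geq 1,
\end{equation*}
with a constant $\eta=\eta(c)>0$ that is independent of $L$. Letting $L\to\infty$ yields $\inf\int\prod|P_{n_\ell}|\,d\mu_s=0$, so Theorem~\ref{Bourg-cri} forces $\sigma$ to be singular.

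Assume $n_1<\cdots<n_{L-1}$ have been fixed and set $\phi=\prod_{\ell<L}|P_{n_\ell}|$; since each factor is bounded by $\sqrt{p_{n_\ell}}$ and $\mu_s$ is a probability measure, $\phi$ is nonnegative and in $L^2(\R,\mu_s)$. The hypothesis applied to this $\phi$ gives, for every $m\in E$ large enough,
\begin{equation*}
\int \phi\,\bigl||P_m|^2-1\bigr|\,d\mu_s \,\geq\, \frac{c}{2}\int\phi\,d\mu_s.
\end{equation*}
Factoring $\bigl||P_m|^2-1\bigr|=\bigl||P_m|-1\bigr|(|P_m|+1)$ and applying Cauchy--Schwarz,
\begin{equation*}
\int\phi(|P_m|-1)^2\,d\mu_s \,\geq\, \frac{(c/2)^2\bigl(\int\phi\,d\mu_s\bigr)^2}{\int\phi(|P_m|+1)^2\,d\mu_s}.
\end{equation*}
Using the identity $2\int\phi|P_m|\,d\mu_s=\int\phi\,d\mu_s+\int\phi|P_m|^2\,d\mu_s-\int\phi(|P_m|-1)^2\,d\mu_s$ together with the pointwise bound $(|P_m|+1)^2\leq 2|P_m|^2+2$, this rearranges to
\begin{equation*}
\int\phi|P_m|\,d\mu_s \,\leq\, \tfrac{1}{2}\int\phi\,d\mu_s+\tfrac{1}{2}\int\phi|P_m|^2\,d\mu_s - \frac{c^2\bigl(\int\phi\,d\mu_s\bigr)^2}{16\bigl(\int\phi|P_m|^2\,d\mu_s+\int\phi\,d\mu_s\bigr)}.
\end{equation*}

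The final and main ingredient is the asymptotic $\int\phi|P_m|^2\,d\mu_s\to\int\phi\,d\mu_s$ as $m\to\infty$ in $E$. This follows from the weak convergence $|P_m|^2\,d\mu_s\to d\mu_s$, itself a consequence of the compact support of $\widehat{K_s}$: expanding $|P_m|^2$ into exponentials $e^{i\theta\tau_{j,j'}}$ with $\tau_{j,j'}=(j-j')h_m+\bar s_m(j)-\bar s_m(j')$, and observing that $|\tau_{j,j'}|\geq h_m\to\infty$ whenever $j\neq j'$ (the spacers are nonnegative, so they only enlarge $|(j-j')h_m|$), the characteristic function of $|P_m|^2\,d\mu_s$ at any fixed $t$ reduces to $\widehat{K_s}(-t)$ once $m$ is large enough, matching the characteristic function of $\mu_s$. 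L\'evy's theorem upgrades pointwise convergence of characteristic functions to weak convergence, and since $\phi\in C_b(\R)$ the asymptotic follows. Inserting it into the estimate above shows that for $m\in E$ large enough
\begin{equation*}
\int\phi|P_m|\,d\mu_s \,\leq\, (1-\eta)\int\phi\,d\mu_s
\end{equation*}
for some $\eta=\eta(c)>0$; choosing any such $m>n_{L-1}$ as $n_L$ closes the induction, and Bourgain's criterion delivers singularity of $\sigma$.
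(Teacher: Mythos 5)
Your argument is correct, and it reaches the paper's conclusion through the same gateway (Bourgain's criterion, Theorem \ref{Bourg-cri}) but with a different organization. The paper's own proof is ``soft'': it quotes Lemma \ref{limsup} from \cite{elabdal-archiv} without proof, sets $\beta$ equal to the infimum of $\int Q\,d\mu_s$ over all finite products $Q=\prod_{\ell}|P_{n_\ell}|$, and derives the self-improving inequality $\beta\le\beta-\frac18(c\beta)^2$, which forces $\beta=0$. You instead make everything explicit and quantitative: your Cauchy--Schwarz step applied to the factorization $\bigl||P_m|^2-1\bigr|=\bigl||P_m|-1\bigr|\,(|P_m|+1)$, combined with the identity for $2\int\phi|P_m|\,d\mu_s$, is exactly the analytic content hidden inside Lemma \ref{limsup}, which you in effect reprove; the asymptotic $\int\phi|P_m|^2\,d\mu_s\to\int\phi\,d\mu_s$ that you establish via compactness of the support of $\widehat{K_s}$ and the bound $|\tau_{j,j'}|\ge h_m$ is the paper's Lemma \ref{Key1}, legitimately applied here since $\phi=\prod_{\ell<L}|P_{n_\ell}|$ is bounded and continuous. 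Replacing the infimum argument by an inductive choice of $n_1<n_2<\cdots$ in $E$ with $\int\prod_{\ell\le L}|P_{n_\ell}|\,d\mu_s\le(1-\eta)^L$ is sound, because although the threshold ``$m$ large enough'' depends on the previously built product $\phi$, the constant $\eta$ (of order $c^2$) does not, so the induction closes and you even obtain an explicit geometric decay rate that the paper's proof does not exhibit. The trade-off is length versus self-containedness: the paper outsources the hard inequality to the companion paper, while your version proves it and is therefore complete as written.
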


The proof of the proposition \ref{CDsuffit} is based on the following lemma.

\begin{lemm}\label{limsup} Let $E$ be an infinite set of positive integers. Let $L$ be a positive integer and  $
0\leq n_1<n_2<\cdots<n_L$ be integers. Denote $Q=\prod_{\ell=1}^L\left | {P_{n_\ell}}\right|$. Then
 \begin{eqnarray*}
  \displaystyle \limsup_{\overset{m\longrightarrow +\infty}{m\in E}}\bigintss  Q \left| P_m\right|\;d\mu_s
\leq 
  \displaystyle \bigintss Q \;d\mu_s -\frac 18\left(
\liminf_{\overset{m\longrightarrow +\infty}{m\in E}}
\displaystyle \bigintss  Q \left| \left| P_m\right| ^2-1\right|
\;d\mu_s\right) ^2. 
\end{eqnarray*}
\end{lemm}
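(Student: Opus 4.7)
The plan is to combine Cauchy--Schwarz with the asymptotic orthogonality of $|P_m|^{2}$ against the fixed product $Q$ as $m\to\infty$ in $E$.

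First, using $|P_m|+1\ge 0$ and $Q\ge 0$, factor $\bigl||P_m|^2-1\bigr|=\bigl||P_m|-1\bigr|(|P_m|+1)$ and apply Cauchy--Schwarz with respect to the positive measure $Q\,d\mu_s$:
\[
\left(\int Q\bigl||P_m|^2-1\bigr|\,d\mu_s\right)^{\!2}\le \int Q(|P_m|-1)^{2}\,d\mu_s\cdot \int Q(|P_m|+1)^{2}\,d\mu_s.
\]
Setting $I_j=\int Q|P_m|^{j}\,d\mu_s$ for $j=0,1,2$ (note $I_0=\int Q\,d\mu_s$ is independent of $m$) and $\chi(m)=\int Q\bigl||P_m|^2-1\bigr|\,d\mu_s$, the two factors on the right are $I_2\pm 2I_1+I_0$; their product equals $(I_2+I_0)^{2}-4I_1^{2}$, which gives the clean algebraic bound $4I_1^{2}\le(I_2+I_0)^{2}-\chi(m)^{2}$.

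The crucial input is then the asymptotic orthogonality $I_2\to I_0$ as $m\to\infty$ in $E$. Expanding
\[
|P_m|^{2}=\tfrac{1}{p_m}\sum_{i,j}e^{i\theta\,((j-i)h_m+\bar s_m(j)-\bar s_m(i))},
\]
the diagonal terms $i=j$ contribute $I_0$, while the off-diagonal terms pair $Q\,d\mu_s$ against unboundedly large frequencies, whose Fourier coefficients vanish in the limit. This is the $\R$-analogue of the classical dissociation argument on $\T$, and is already implicit in the derivation of Theorem~\ref{Bourg-cri} in \cite{elabdal-archiv}. Combined with the scalar inequality $\sqrt{A^{2}-B}\le A-B/(2A)$ and the normalization $I_0\le 1$ (from Jensen, $\int Q\,d\mu_s\le(\int Q^{2}\,d\mu_s)^{1/2}$, together with $\int Q^{2}\,d\mu_s=\int\prod_{\ell}|P_{n_{\ell}}|^{2}\,d\mu_s$ being asymptotically $\leq 1$ by the same orthogonality applied to the partial products), this gives $I_1\le I_0-\tfrac{1}{8}\chi(m)^{2}+o(1)$, and the stated lemma follows by taking $\limsup$ on the left and $\liminf$ on the right along $m\in E$.

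The main obstacle is the asymptotic orthogonality step on $\R$: whereas on $\T$ it reduces to Riemann--Lebesgue for trigonometric polynomials, on $\R$ the density $K_s$ has unbounded support, so one must instead exploit decay of the Fourier transform of the finite positive measure $Q\,d\mu_s$ at the frequencies $(j-i)h_m+\bar s_m(j)-\bar s_m(i)$, which grow without bound as $m\to\infty$. This is precisely the extension of the Bourgain--Klemes--Parreau--Nadkarni machinery to the real line that the paper credits to the companion work \cite{elabdal-archiv}, and it is the one nontrivial ingredient in the proof.
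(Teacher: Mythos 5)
Your argument is correct and is essentially the standard Bourgain-type proof of this lemma: the paper only recalls the statement from \cite{elabdal-archiv} without reproducing its proof, but that proof rests on exactly the two inputs you identify, namely the Cauchy--Schwarz factorization of $\bigl||P_m|^2-1\bigr|=\bigl||P_m|-1\bigr|\,(|P_m|+1)$ against the measure $Q\,d\mu_s$, the weak convergence $\int Q\,|P_m|^2\,d\mu_s\to\int Q\,d\mu_s$ (Lemma~\ref{Key1} applied to the bounded continuous function $Q$), and the normalization $\int Q\,d\mu_s\le 1$ coming from the almost-orthogonality of the partial products. Your algebra via $(I_2+I_0)^2-4I_1^2$ and $\sqrt{A^2-B}\le A-B/(2A)$ is merely a cosmetic variant of the usual identity $\sqrt{x}=1+\tfrac{x-1}{2}-\tfrac{(\sqrt{x}-1)^2}{2}$, so this counts as the same approach.
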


\noindent{}For sake of completeness we recall from \cite{elabdal-archiv} 
the proof of the proposition \ref{CDsuffit}. 
\begin{proof}[Proof of Proposition \ref{CDsuffit}]${}$\\
Let $\displaystyle \beta=\inf\left\{\bigintss Q\;
d\mu_s~:~Q=\prod_{\ell=1}^L\left | {P_{n_\ell}}\right|, L \in \N,
0\leq n_1<n_2<\cdots<n_L\right\}$. Then, for any such $Q$, we have
\begin{eqnarray*}
    \bigintss Q
\;d\mu_s \geq \beta\quad\text{and}\quad \liminf \bigintss Q |P_m| \;d\mu_s \geq
\beta.
\end{eqnarray*}

\noindent{} Thus by Lemma \ref{limsup} and by taking the infimum over all $Q$ we get
\begin{eqnarray*}
\beta \leq \beta-\frac18(c\beta)^2
\end{eqnarray*}
\noindent It follows that
\[
\beta=0,
\] 
and the proposition follows from Theorem \ref{Bourg-cri}.
\end{proof}


\section{the CLT method for trigonometric sums and the singularity of the spectrum 
of exponential staircase rank one flows of type I}
The main goal of this section is to prove the following proposition
\begin{prop}\label{key-sasha} Let $(T^t)_{t \in \R}=
\left(T^t_{(p_n,\omega_n)_{n\geq0}}\right)_{t \in \R}$
 be a exponential staircase rank one flow of type I
associated to  
\begin{eqnarray}
\nonumber \omega_n(p)=\frac{m_n}{\varepsilon_n^2}p_n \exp\big({\frac{\varepsilon_n.p}{p_n}}\big), 
~~p=0,\cdots,p_n-1.
\end{eqnarray} Then, there exists a constant $c>0$ such that, 
for any positive function $f$ in $L^2(\R,\mu_s)$, we have 
$$\liminf_{m\longrightarrow +\infty} \bigintss_{\R}f(t)\left||P_m(t)|^2-1\right|
\;d\mu_s(t) \geq c \bigintss_{\R}f(t) \;d\mu_s(t).$$ 
\end{prop}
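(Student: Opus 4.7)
The plan is to establish a central limit theorem for $|P_m|^2$ under $\mu_s$, weighted by a nice test density, and then to evaluate it on the continuous functional $x\mapsto|x-1|$. The target limit is $\mathbb{E}\bigl[\,\bigl||Z|^2-1\bigr|\,\bigr]=2/e>0$, where $Z$ is a standard complex Gaussian (so that $|Z|^2$ is $\mathrm{Exp}(1)$); this yields the constant $c=2/e$. A density argument transfers the estimate from test densities to every positive $f\in L^2(\R,\mu_s)$.

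For the setup, the spacer relation $h_k+s_{k+1,p+1}=\omega_k(p+1)-\omega_k(p)$ telescopes to $jh_m+\bar s_m(j)=\omega_m(j)-\omega_m(0)$, so after factoring the unit-modulus phase $e^{-i\theta\omega_m(0)}$ one has
\[
|P_m(\theta)|^2=1+\frac{1}{p_m}\sum_{\substack{0\leq j,j'<p_m\\ j\neq j'}}e^{i\theta(\omega_m(j)-\omega_m(j'))}.
\]
I would take as admissible test functions the positive $\phi$ whose Fourier transform $\widehat\phi$ is compactly supported; since $K_s$ itself has compactly supported Fourier transform (a triangular kernel on $[-s,s]$), the product $\widehat{\phi K_s}=\widehat\phi*\widehat{K_s}$ is also compactly supported. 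This is the substitute, flagged in the introduction, for the trigonometric polynomials on the torus.

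The heart of the proof is a method-of-moments computation. Expanding the $2k$-th moment yields
\[
\bigintss\phi\,|P_m|^{2k}\,d\mu_s=\frac{1}{p_m^{k}}\sum_{\underline j,\underline j'}\widehat{\phi K_s}\Bigl(\sum_{i=1}^{k}\omega_m(j_i)-\sum_{i=1}^{k}\omega_m(j_i')\Bigr),
\]
with $\underline j,\underline j'\in\{0,\dots,p_m-1\}^k$, and only tuples whose frequency argument lies in the compact support of $\widehat{\phi K_s}$ contribute. The Lindemann--Weierstrass theorem now applies: the exponents $\varepsilon_m j/p_m$ for $j=0,\dots,p_m-1$ are distinct rationals, so the exponentials $\exp(\varepsilon_m j/p_m)$ are linearly independent over $\mathbb{Q}$, whence any nonzero integer linear combination is transcendental. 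Combined with the Taylor expansion
\[
\sum_{i}\omega_m(j_i)-\sum_{i}\omega_m(j_i')=\sum_{\ell\geq1}\frac{m_m\,\varepsilon_m^{\ell-2}}{\ell!\,p_m^{\ell-1}}\Bigl(\sum_{i}j_i^{\ell}-\sum_{i}(j_i')^{\ell}\Bigr)
\]
and the scaling hypotheses $(1)$--$(3)$, this non-vanishing is upgraded to the quantitative statement that non-diagonal frequencies escape any fixed compact set as $m\to\infty$. Hence for $m$ large only the diagonal tuples (those with $\{j_i\}=\{j_i'\}$ as multisets) survive, and a count of permutations gives $\int\phi\,|P_m|^{2k}\,d\mu_s\to k!\int\phi\,d\mu_s$, matching the $2k$-th absolute moment of a standard complex Gaussian.

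To conclude I would upgrade the moment convergence to convergence of $\int\phi\,F(|P_m|^2)\,d\mu_s$ for every continuous $F$ of polynomial growth (standard truncation, using $\int|P_m|^{2k}\,d\mu_s\to k!$), specialise to $F(x)=|x-1|$ to recover $c=2/e$, and then extend by density. The extension to arbitrary positive $f\in L^2(\R,\mu_s)$ uses the announced density of the band-limited class (or rather its non-negative cone, obtained e.g.\ as squares of band-limited functions) together with Cauchy--Schwarz, controlled by the uniform bound $\bigl\||P_m|^2-1\bigr\|_{L^2(\mu_s)}^2\to1$. The principal obstacle is the non-diagonal step: showing quantitatively, via Lindemann--Weierstrass combined with hypotheses $(1)$--$(3)$, that the integer combinations $\sum_i(j_i^\ell-(j_i')^\ell)$ interact with the scales $m_m/\varepsilon_m$, $m_m/p_m$ and $\log p_m$ so that every non-diagonal frequency leaves the fixed compact support of $\widehat{\phi K_s}$. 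This is precisely where the intricate definition of the exponential staircase class is consumed.
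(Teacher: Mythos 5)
Your architecture is genuinely different from the paper's. The paper runs a McLeish-type central limit theorem for the real part $\Re(P_m)$ on an arbitrary Borel set $A$ of positive $\mu_s$-measure (Theorem \ref{Gauss}) and then extracts the lower bound from the Chebyshev-type inequality $\int_A\left||P_m|^2-1\right|d\mu_s\geq(x^2-1)\,\mu_s\{|\Re (P_m)|>x\}$, which yields the constant $c=(x-1)\bigl(1-{\mathcal N}([-\sqrt2x,\sqrt2x])\bigr)$; you instead propose a method of moments for $|P_m|^2$ against band-limited densities, aiming at an $\mathrm{Exp}(1)$ limit law and $c=2/e$. Both routes rest on the same arithmetic core (Hermite--Lindemann to separate the words $\sum\eta_j\omega_m(j)$) and on the same need to control off-diagonal frequencies against a fixed compact set; your closing density step (non-negative band-limited approximants plus the uniform $L^2(\mu_s)$ bound on $\left||P_m|^2-1\right|$) is sound.

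The gap is in the step you yourself flag as the principal obstacle, and it is not merely unfinished --- as stated it is false. It is not true that every non-diagonal frequency escapes a fixed compact set. Take $k=2$ and the tuples $\underline{j}=(j,j+2)$, $\underline{j}'=(j+1,j+1)$: the corresponding frequency is
\[
\omega_m(j)+\omega_m(j+2)-2\,\omega_m(j+1)=\frac{m_m}{\varepsilon_m^2}\,p_m\,e^{\varepsilon_m j/p_m}\bigl(e^{\varepsilon_m/p_m}-1\bigr)^2\asymp\frac{m_m}{p_m},
\]
which stays bounded (and even tends to $0$) in the regime $p_m\geq m_m/\varepsilon_m$ explicitly allowed by hypothesis (2); thus on the order of $p_m$ off-diagonal pairs sit inside any neighbourhood of the origin, hence inside the support of $\widehat{\phi K_s}$. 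Note also that Hermite--Lindemann gives only the qualitative fact that such combinations are nonzero; transcendence provides no lower bound on their size, so ``Lindemann--Weierstrass plus Taylor expansion'' cannot be upgraded to escape from a compact set --- the quantitative lower bounds in the paper come from elementary inequalities such as $x\leq e^x-1\leq 2x$, applied case by case. What is actually true, and what the paper's long combinatorial analysis establishes, is a \emph{counting} statement: the number of words of length $2^k$ lying in $[-\Omega,\Omega]$ is at most of order $\Omega\,p_m^{k}(\log p_m)^{k-1}/(m_m\,\varepsilon_m^{k-2})$, and it is only after multiplying by the coefficient decay in $p_m^{-k}$ and invoking $\log(p_m)/m_m\to0$ that the off-diagonal contribution vanishes. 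Your diagonal-only asymptotics $\int\phi\,|P_m|^{2k}\,d\mu_s\to k!\int\phi\,d\mu_s$ can survive, but only after replacing ``every off-diagonal frequency escapes'' by ``the off-diagonal terms that do not escape are few enough to be negligible''; carrying out that count is essentially the entire content of the paper's proof and does not follow from the ingredients you list.
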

The proof of the proposition \ref{key-sasha} is based on the
study of the stochastic behaviour of the sequence $ |P_m|$.
For that, we follow the strategy introduced in \cite{elabdaletds} based on the method 
of the Central Limit Theorem for trigonometric sums.

This methods takes advantage of the following classical expansion 
\[
\exp(ix)=(1+ix)\exp\Big(-\frac{x^2}2+r(x)\Big), 
\]
\noindent{}where $|r(x)| \leq |x|^3$, for all real number $x$ \footnote{this is a direct consequence 
of Taylor formula with integral remainder.}, combined with some ideas developed in the proof 
of martingale central limit theorem due to McLeish \cite{mcleish}. Precisely, the main ingredient 
is the following theorem proved in \cite{elabdaletds}.

\begin{thm}\label{Mcleish}
Let $\{X_{nj}:1 \leq j \leq
k_n, n\geq 1\}$ be a triangular array of random variables and $t$ a real number. Let 
$$\displaystyle S_n=\sum_{j=1}^{k_n}X_{nj},~~~~~~~\displaystyle T_n=\prod_{j=1}^{k_n}(1+itX_{nj}),$$
and $$\displaystyle U_n=\exp \left (-\frac{t^2}2\sum _{j=1}^{k_n}
X_{nj}^2+\sum_{j=1}^{k_n}r(tX_{nj})\right ).$$ Suppose that
\begin{enumerate}
\item $\{T_n\}$ is uniformly integrable.
\item $\E(T_n) \tendn 1$.
\item $\displaystyle \sum_{j=1}^{k_n} X_{nj}^2 \tendn 1$ in probability.
\item $\ds \max_{1 \leq j \leq k_n} |X_{nj}|\tendn 0$ in probability.
\end{enumerate}
Then $\E(\exp(it S_n))\longrightarrow \exp(-\displaystyle\frac{t^2}2).$ 
\end{thm}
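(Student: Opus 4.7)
The plan is to exploit the elementary identity $\exp(ix) = (1+ix)\exp\bigl(-x^2/2+r(x)\bigr)$ with $|r(x)|\le |x|^3$, applied term by term with $x = tX_{nj}$. Multiplying over $j$ gives the factorization $\exp(itS_n) = T_n\, U_n$, so that computing $\E(\exp(itS_n))$ reduces to analyzing the product of the random factor $T_n$ and the ``almost deterministic'' factor $U_n$. A key preliminary observation is that, since $|\exp(itS_n)| = 1$, one automatically has $|U_n| = 1/|T_n| = \prod_{j=1}^{k_n}(1+t^2 X_{nj}^2)^{-1/2} \le 1$; thus $U_n$ is uniformly bounded in modulus, which compensates for the fact that $T_n$ itself is not bounded.

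First I would show $U_n\longrightarrow \exp(-t^2/2)$ in probability. The exponent of $U_n$ is $-\tfrac{t^2}{2}\sum_j X_{nj}^2 + \sum_j r(tX_{nj})$; by (3), the first sum tends to $1$ in probability, and for the remainder the cubic bound together with the elementary estimate
$$\Bigl|\sum_{j=1}^{k_n} r(tX_{nj})\Bigr|\le |t|^3\sum_{j=1}^{k_n}|X_{nj}|^3\le |t|^3\Bigl(\max_{1\le j\le k_n}|X_{nj}|\Bigr)\sum_{j=1}^{k_n}X_{nj}^2$$
shows that the error vanishes in probability thanks to (3) and (4) together.

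Next I would split
$$\E(\exp(itS_n))=\E\bigl(T_n(U_n-e^{-t^2/2})\bigr)+e^{-t^2/2}\,\E(T_n),$$
and control each term. The second tends to $e^{-t^2/2}$ by (2). For the first, fix $\varepsilon>0$ and let $A_n=\{|U_n-e^{-t^2/2}|>\varepsilon\}$; then $\P(A_n)\to 0$ by the previous step. On $A_n^c$ the integrand is dominated by $\varepsilon|T_n|$, contributing at most $\varepsilon\sup_n\E|T_n|$, which is finite by the uniform integrability assumption (1). On $A_n$ it is dominated by $(1+e^{-t^2/2})|T_n|$, and uniform integrability of $\{T_n\}$ converts $\P(A_n)\to 0$ into $\E(|T_n|\mathbf{1}_{A_n})\to 0$. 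Letting $\varepsilon\to 0$ finishes the argument.

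The main obstacle is precisely the interplay of (1) with the other hypotheses: none of the four conditions can be dropped, and the argument rests on a delicate splitting in which the ``small'' part is handled by the $L^1$-bound implicit in uniform integrability while the ``rare'' part uses the absolute continuity of $\int|T_n|$ with respect to probability. A minor subtlety is that $|r(x)|\le |x|^3$ is only naturally valid for $|x|$ bounded, and condition (4) is what guarantees that eventually the relevant $tX_{nj}$ all fall in that range with high probability, making the cubic remainder bound globally usable.
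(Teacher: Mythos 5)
Your argument is correct and is precisely the route the paper intends: the paper does not reprove Theorem \ref{Mcleish} (it defers to \cite{elabdaletds} and \cite{mcleish}), but the factorization $\exp(itS_n)=T_nU_n$ via $\exp(ix)=(1+ix)\exp(-x^2/2+r(x))$, the bound $|U_n|=|T_n|^{-1}\le 1$, the convergence $U_n\to e^{-t^2/2}$ in probability from (3)--(4), and the splitting $\E(T_nU_n)=\E\bigl(T_n(U_n-e^{-t^2/2})\bigr)+e^{-t^2/2}\E(T_n)$ handled by uniform integrability is exactly McLeish's original proof, which is what the surrounding text describes.
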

We remind that the sequence  $\{X_n, n\geq 1\}$ of random  variables is said to be uniformly integrable if and only 
if 
\[
\lim_{c \longrightarrow +\infty}\bigintss_{\big\{|X_n| >c\big \}} \big|X_n\big | d\Pr =0 ~~~
{\textrm {uniformly~in~}} n.   
\]
and it is well-known that if 
\begin{eqnarray}\label{uniform}
\sup_{ n \in \N}\bigg(\ce\big(\big|X_n\big|^{1+\varepsilon}\big)\bigg) < +\infty,
\end{eqnarray}
for some $\varepsilon$ positive, then $\{X_n\}$ is uniformly integrable.\\

Using Theorem \ref{Mcleish} we shall prove the following extension to $\R$ of Salem-Zygmund CLT theorem,
which seems to be of independent interest.
\begin{thm}\label{Gauss}
Let $A$ be a Borel subset of $\R$ with $\mu_s(A)>0$ and 
let $(m_n,p_n)_{n \in \N}$ be a sequence of positive integers such that 
$m_n$ and $p_n$ goes to infinity as $n$ goes to infinity. Let $\varepsilon_n$ be a sequence of 
rationals numbers which converge to $0$ and
$$ 
\omega_n(j)=\frac{m_n}{\varepsilon_n^2}p_n\exp\big(\frac{\varepsilon_n.j}{p_n}\big)
{\rm {~~for~~any~~}} j \in \{0,\cdots,p_n-1\}.
$$ 
Assume that
\begin{enumerate}
 \item $m_n \geq \varepsilon_n.h_n$, for any $n \in \N$.
\item $\displaystyle \frac{\log(p_n)}{m_n} \tendn 0$ if $p_n \geq \frac{m_n}{\varepsilon_n}$
\item $\displaystyle \frac{\log(p_n)}{m_n} \tendn 0$ and  $\displaystyle \frac{\log(p_n)}{p_n} \leq \varepsilon_n$ 
if $p_n < \frac{m_n}{\epsilon_n}.$
\end{enumerate}   
Then, the distribution of the
sequence of random variables
$\frac{\sqrt{2}}{\sqrt{p_n}}\sum_{j=0}^{p_n-1} \cos(\omega_n(j)t)$
converges to the Gauss distribution. That is, for any real number $x$, we have 

\begin{eqnarray}{\label{eq:eq6}}
\frac1{\mu_s(A)}\mu_s\left \{t \in A~~:~~\frac{\sqrt{2}}{\sqrt{p_n}}
\sum_{j=0}^{p_n-1} \cos(\omega_n(j)t) \leq x \right \}\nonumber \\
\tend{n}{\infty}\frac1{\sqrt{2\pi}}\bigintss_{-\infty}^{x}
e^{-\frac12t^2}dt\stackrel{\rm {def}}{=}{\mathcal {N}}\left( \left] -\infty,x \right] \right) .
\end{eqnarray}
\end{thm}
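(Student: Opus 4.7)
The plan is to apply McLeish's central limit theorem (Theorem~\ref{Mcleish}) to the triangular array $X_{nj}(t)=\sqrt{2/p_n}\,\cos(\omega_n(j)t)$ with $k_n=p_n$, built into which is the factorization $e^{iuS_n}=T_nU_n$ that reduces everything to controlling $T_n$. Concretely, I will verify the four conditions not only under $d\mu_s$ but, more flexibly, under any weighted measure $\phi\,d\mu_s$ with $\phi$ in a class dense in $L^1(\R,d\mu_s)$. Once the convergence $\int \phi\, e^{iuS_n}\,d\mu_s\to e^{-u^2/2}\int \phi\,d\mu_s$ is proved for such $\phi$, approximating $\1_A$ in $L^1(\mu_s)$ (exploiting $|e^{iuS_n}|\equiv 1$) and applying L\'evy's continuity theorem yields the convergence of distributions on $A$.

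Conditions~(3) and~(4) of Theorem~\ref{Mcleish} are short. Condition~(4) is immediate from $|X_{nj}|\le\sqrt{2/p_n}\to 0$. For~(3) one writes $\sum_j X_{nj}^2=1+\tfrac{1}{p_n}\sum_j\cos(2\omega_n(j)t)$ and computes $\int \phi\bigl(\tfrac{1}{p_n}\sum_j\cos(2\omega_n(j)t)\bigr)^2 d\mu_s$ by expansion: all cross terms involve frequencies of the form $2(\omega_n(j)\pm\omega_n(k))$ or $4\omega_n(j)$, which diverge to infinity and thus sit outside the compact support of $\widehat{\phi K_s}$ once $\widehat\phi$ is compactly supported, leaving a vanishing main term of order $1/p_n$. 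Condition~(1) is simpler still: $|T_n|^2=\prod_j(1+u^2X_{nj}^2)\le\exp\bigl(u^2\sum_j X_{nj}^2\bigr)\le e^{2u^2}$, so $T_n$ is uniformly bounded, a fortiori uniformly integrable against any $\phi\,d\mu_s$.

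The crux is condition~(2). Expanding
\begin{equation*}
T_n=\prod_{j=0}^{p_n-1}\Bigl(1+\tfrac{iu}{\sqrt{2p_n}}\bigl(e^{i\omega_n(j)t}+e^{-i\omega_n(j)t}\bigr)\Bigr)
\end{equation*}
and integrating against $\phi(t)\,d\mu_s(t)$ converts each summand into $\widehat{\phi K_s}$ evaluated at a signed sum $\Lambda_{J,\epsilon}=\sum_{j\in J}\epsilon_j\omega_n(j)$ with $\epsilon_j\in\{\pm 1\}$. When $\widehat\phi$ has compact support, so does $\widehat{\phi K_s}=\widehat\phi*\widehat{K_s}$, since $\widehat{K_s}$ is the triangular function on $[-s,s]$. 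The plan is now to show that for every fixed $\phi$ and every non-empty $J$, $|\Lambda_{J,\epsilon}|\to\infty$ as $n\to\infty$, uniformly in $(J,\epsilon)$, so only $J=\emptyset$ survives in the limit and $\int \phi T_n\,d\mu_s\to\int\phi\,d\mu_s$. Factoring $\omega_n(j)=\alpha_n e^{\beta_n j}$ with $\alpha_n=m_n p_n/\varepsilon_n^2$ and $\beta_n=\varepsilon_n/p_n$, the Hermite--Lindemann theorem applied to the distinct rationals $\{\beta_n j:j\in J\}$ gives $\sum_{j\in J}\epsilon_j e^{\beta_n j}\neq 0$, and a quantitative refinement exploiting convexity and monotonicity of $x\mapsto e^x$ together with hypotheses~(1)--(3) should yield a lower bound of order $\beta_n$, whence $|\Lambda_{J,\epsilon}|\gtrsim \alpha_n\beta_n=m_n/\varepsilon_n\ge h_n\to\infty$.

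The principal obstacle is exactly this last step: Hermite--Lindemann only furnishes non-vanishing, and one needs a separate, quantitative argument to bound $\sum_{j\in J}\epsilon_j e^{\beta_n j}$ away from zero in a way that remains effective as $|J|$ varies up to $p_n$. Once this is settled, Theorem~\ref{Mcleish} delivers $\int \phi\, e^{iuS_n}\,d\mu_s\to e^{-u^2/2}\int \phi\,d\mu_s$ for every $\phi\ge 0$ with compactly supported $\widehat\phi$. Density of such $\phi$ in $L^1(\R,d\mu_s)$ (obtained, e.g., by convolving continuous compactly supported functions with a Schwartz mollifier whose Fourier transform is compactly supported) then promotes the characteristic-function convergence to $\phi=\1_A$, completing the proof of~(\ref{eq:eq6}).
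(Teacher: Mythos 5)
Your overall architecture (McLeish's theorem applied to $X_{nj}=\sqrt{2/p_n}\,\cos(\omega_n(j)t)$, verification of conditions (1), (3), (4), reduction of condition (2) to test functions with compactly supported Fourier transform, then $L^1$-approximation of $\chi_A K_s$) coincides with the paper's. The gap is in the crux, condition (2), and you have correctly located it yourself: you propose to show that every nonzero word $\Lambda_{J,\epsilon}=\sum_{j\in J}\epsilon_j\omega_n(j)$ diverges uniformly, with a hoped-for lower bound of order $\alpha_n\beta_n=m_n/\varepsilon_n$. This is false. Take a length-$4$ word whose signs cancel pairwise and whose exponents $k_1<k_2<k_3<k_4$ satisfy $k_2-k_1=k_4-k_3=\alpha_1$ (the paper's ``Case 3''): then
\[
|\Lambda|\;=\;\frac{m_n}{\varepsilon_n^2}\,p_n\,e_n(k_1)\bigl(e_n(\alpha_1)-1\bigr)\bigl(e_n(\alpha_2)-1\bigr)\;\asymp\;\frac{m_n}{p_n}\,\alpha_1\alpha_2,
\]
which is $O(1)$, indeed $o(1)$, whenever $p_n\gg m_n$ --- a regime explicitly permitted by hypothesis (2) of the theorem ($p_n\ge m_n/\varepsilon_n$ with $\varepsilon_n\to 0$). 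So nonzero words can and do land inside the support of $\widehat{\phi K_s}$, and no quantitative refinement of Hermite--Lindemann will rescue the uniform-divergence claim.

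The paper's resolution is different in kind: it is a counting argument, not a separation argument. Hermite--Lindemann is used only to guarantee that distinct index/sign patterns yield distinct frequencies (so the expansion of $T_n$ over words has controlled coefficients), and then one bounds the \emph{number} of words of each even length $r$ that can fall in a fixed window $[-\Omega,\Omega]$ --- roughly $\Omega\,p_n^{k}(\log p_n)^{k-1}/(m_n\varepsilon_n^{k-2})$ for $r=2^k$, obtained by counting solutions of $\alpha_1\cdots\alpha_k\lesssim \Omega\,p_n^{k-1}/(m_n\varepsilon_n^{k-2})$ --- and multiplies this count by the coefficient bound $|\rho_w^{(n)}(x)|\le 2^{1-r}|x|^r p_n^{-r/2}$. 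Summing over even $r$ and invoking $\log(p_n)/m_n\to 0$ shows that the total contribution of the words surviving in the window vanishes, even though the individual words do not escape to infinity. Without this (or an equivalent) counting step, your verification of condition (2) does not close.
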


\noindent{}We start by proving the following proposition.

\begin{prop}\label{w-lim}
 There exists a subsequence of
the sequence $\left (\left |\left| P_n(\theta)\right|^2-1 \right|\right)_{n\geq \N}$
which converges weakly in $L^2(\R,\mu_s)$ to some non-negative function
$\phi$ bounded above by $2$.
\end{prop}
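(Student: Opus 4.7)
The plan is to invoke Banach--Alaoglu in the Hilbert space $L^2(\R,\mu_s)$ to extract a weakly convergent subsequence, and then to pin down the size of the limit by splitting $||P_n|^2-1|$ into an oscillatory piece that vanishes weakly and a piece that is pointwise bounded by $2$.

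First I would establish that the sequence is bounded in $L^2(\R,\mu_s)$. Expanding
\[
\int_{\R}\bigl(|P_n|^2-1\bigr)^2\,d\mu_s \;=\; \int_{\R}|P_n|^4\,d\mu_s - 2\int_{\R}|P_n|^2\,d\mu_s + 1,
\]
each term becomes a finite sum of values $\widehat{K_s}(\xi)$ with $\xi$ ranging over sums and differences of the frequencies $\omega_n(j)$. Since $K_s$ is a Fej\'er-type density, $\widehat{K_s}$ is supported in $[-s,s]\subset[-1,1]$. The gaps $\omega_n(j+1)-\omega_n(j)$ are of order at least $m_n/\varepsilon_n\to\infty$, and strict convexity of $p\mapsto\exp(\varepsilon_n p/p_n)$ prevents any combination $\omega_n(i_1)+\omega_n(i_2)-\omega_n(i_3)-\omega_n(i_4)$ from landing in $[-s,s]$ unless $\{i_1,i_2\}=\{i_3,i_4\}$ as multisets. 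Counting these diagonal tuples gives $\int_{\R}|P_n|^2\,d\mu_s\to 1$ and $\int_{\R}|P_n|^4\,d\mu_s=O(1)$. By reflexivity of $L^2(\R,\mu_s)$ I may extract a subsequence $(n_k)$ along which $||P_{n_k}|^2-1|\rightharpoonup\phi$ and (passing to a further subsequence) $|P_{n_k}|^2-1\rightharpoonup\psi$ in $L^2(\R,\mu_s)$; nonnegativity of $||P_{n_k}|^2-1|$ passes to the weak limit, so $\phi\ge 0$ a.e.

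Next I would show that $\psi=0$. Writing
\[
|P_n|^2-1 \;=\; \frac{1}{p_n}\sum_{j\ne k}e^{i\theta(\omega_n(j)-\omega_n(k))},
\]
for any $g\in L^2(\R,\mu_s)$ whose Fourier transform has compact support,
\[
\int_{\R}(|P_n|^2-1)\,g\,d\mu_s \;=\; \frac{1}{p_n}\sum_{j\ne k}\widehat{gK_s}\bigl(\omega_n(k)-\omega_n(j)\bigr).
\]
The function $\widehat{gK_s}=\widehat{g}\ast\widehat{K_s}$ has compact support while $|\omega_n(j)-\omega_n(k)|\to\infty$ for every fixed pair $j\ne k$, so the sum vanishes for $n$ large. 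Density of such test functions in $L^2(\R,\mu_s)$ (already invoked in the introduction) combined with the uniform $L^2$-bound yields $\psi=0$.

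Finally, the elementary pointwise decomposition
\[
\bigl||P_n|^2-1\bigr| \;=\; (|P_n|^2-1) + 2(1-|P_n|^2)^+,
\]
together with the bound $0\le(1-|P_n|^2)^+\le 1$ coming from $|P_n|^2\ge 0$, shows that $2(1-|P_{n_k}|^2)^+\rightharpoonup\phi-\psi=\phi$. A sequence bounded in $L^\infty$ by $2$ has weak $L^2$-limit also bounded a.e.\ by $2$, so $0\le\phi\le 2$, completing the proof. The main technical obstacle is the $L^2$-boundedness step, where one must quantitatively use the strict convexity of $p\mapsto\omega_n(p)$ together with the divergence of $m_n/\varepsilon_n$ to exclude near-coincidences among quadratic combinations of the frequencies that would otherwise fall inside the short window $[-s,s]$ where $\widehat{K_s}$ is supported.
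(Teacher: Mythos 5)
Your proof is correct in its architecture and reaches the same conclusion, but it is organized quite differently from the paper's. The paper disposes of the proposition in a few lines: it asserts $L^2(\mu_s)$-boundedness of $\bigl||P_n|^2-1\bigr|$ (from the normalization of $P_n$, leaving the needed $L^4$ bound on $P_n$ implicit), extracts a weak limit $\phi$, and then gets $\phi\le 2$ from the crude pointwise bound $\bigl||P_n|^2-1\bigr|\le |P_n|^2+1$ together with the weak convergence $|P_n|^2K_s\,d\theta\to K_s\,d\theta$, which it imports as Lemma \ref{Key1} from \cite{elabdal-archiv}. You instead (i) prove the $L^2$-bound explicitly via the compact support of $\widehat{K_s}$ in $[-s,s]$ and the separation of the frequencies, (ii) reprove Lemma \ref{Key1} yourself by testing against band-limited functions (the pairwise gaps are at least of order $m_n/\varepsilon_n\to\infty$, so this works and is uniform over the pairs), and (iii) replace the triangle inequality by the exact identity $\bigl||P_n|^2-1\bigr|=(|P_n|^2-1)+2(1-|P_n|^2)^{+}$, so that the bound $\phi\le 2$ drops out of $\psi=0$ plus an $L^\infty$ bound on the positive part. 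Step (iii) is a clean alternative and step (ii) is a legitimate self-contained substitute for the cited lemma; what each approach buys is clear: the paper's is shorter because it outsources the weak convergence, yours is self-contained and makes the $L^2$-boundedness, which the paper glosses over, explicit.

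One step is overstated, though the conclusion survives. In the $L^4$ computation you claim that no off-diagonal combination $\omega_n(i_1)+\omega_n(i_2)-\omega_n(i_3)-\omega_n(i_4)$ can land in $[-s,s]$. The paper's own analysis of length-$4$ words (Case 3, $\alpha_1'=\alpha_1$) only yields the lower bound $|w_n^{(4)}|\ge \frac{m_n}{p_n}\alpha_1\alpha_2$, and since the hypotheses allow $p_n\ge m_n/\varepsilon_n$, the quantity $m_n/p_n$ may tend to $0$; so some off-diagonal $4$-tuples can fall inside $[-s,s]$. The correct repair is the counting estimate used later in the proof of Theorem \ref{Gauss}: the number of such tuples is at most of order $\frac{p_n^2}{m_n}\log p_n$, and each contributes at most $p_n^{-2}$ to $\int|P_n|^4\,d\mu_s$, so the off-diagonal contribution is $O(\log p_n/m_n)\to 0$ and $\int|P_n|^4\,d\mu_s=O(1)$ still holds. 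With that substitution your argument is complete.
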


\noindent{}In the proof of Proposition \ref{w-lim}
we shall need the following lemma proved in \cite{elabdal-archiv}.

\begin{lemm}\label{Key1} The sequence of probability measures $|P_n(\theta)|^2 K_s(\theta)\,d\theta$ converges weakly
to  $K_s(\theta)\,d\theta$.
\end{lemm}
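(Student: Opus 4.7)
The plan is to verify weak convergence by showing pointwise convergence of the Fourier transforms of the measures $|P_n|^2 K_s\,d\theta$ and then invoking L\'evy's continuity theorem.

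First I would expand $|P_n|^2$ as the double sum
$$|P_n(\theta)|^2 = 1 + \frac{1}{p_n}\sum_{\substack{0\le j,k\le p_n-1\\ j\neq k}} e^{i\theta[(j-k)h_n + \bar s_n(j)-\bar s_n(k)]},$$
so that for any $\xi\in\R$,
$$\int e^{-i\xi\theta}|P_n(\theta)|^2 K_s(\theta)\,d\theta = \widehat{K_s}(\xi) + \frac{1}{p_n}\sum_{j\ne k}\widehat{K_s}\bigl(\xi-(j-k)h_n-\bar s_n(j)+\bar s_n(k)\bigr),$$
with the conventions of the paper for $\bar s_n(\cdot)$.

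The decisive ingredient is the band-limited nature of $\widehat{K_s}$: a direct computation (treating $K_s$ as a Fej\'er-type density on $\R$, i.e.\ writing $\mathrm{sinc}^2$ as the self-convolution of an indicator function on the Fourier side) yields $\widehat{K_s}(\xi)=(1-|\xi|/s)_+$, compactly supported in $[-s,s]$. Since the spacers $s_{n+1,i}$ are non-negative, the partial sums $\bar s_n(\cdot)$ are non-decreasing; hence for $j\ne k$ the modulus of $(j-k)h_n+\bar s_n(j)-\bar s_n(k)$ is bounded below by $|j-k|h_n\ge h_n$. The CS construction forces $h_n\to\infty$ (indeed $h_n\ge 2^n h_0$), so for each fixed $\xi$ and every sufficiently large $n$, the argument of $\widehat{K_s}$ in every off-diagonal term lies outside $[-s,s]$, making that term vanish.

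Consequently, the Fourier transform of $|P_n|^2 K_s\,d\theta$ converges pointwise to $\widehat{K_s}(\xi)$, a function continuous at $\xi=0$. L\'evy's continuity theorem then delivers the desired weak convergence of $|P_n|^2 K_s\,d\theta$ to $K_s\,d\theta$. The same computation incidentally verifies a posteriori that each $|P_n|^2 K_s\,d\theta$ is a probability measure, since its value at $\xi=0$ equals $\widehat{K_s}(0)=1$ once $h_n>s$, which holds as $s<1\le h_n$. There is no genuine analytic obstacle: the argument is essentially a bookkeeping exercise that exploits the band-limiting of $\widehat{K_s}$ and the growth of the tower heights $h_n$, and it does \emph{not} require any of the arithmetic or transcendence assumptions used later for the CLT.
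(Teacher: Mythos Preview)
Your argument is correct. The paper does not actually supply a proof of this lemma; it simply invokes it as ``proved in \cite{elabdal-archiv}'', so there is no in-paper proof to compare against. Your approach---expanding $|P_n|^2$, using that $\widehat{K_s}(\xi)=(1-|\xi|/s)_+$ is supported in $[-s,s]$, and observing that the off-diagonal frequencies have modulus at least $h_n\to\infty$ so that their contributions vanish---is the natural one and is presumably what the cited reference does as well. The invocation of L\'evy's continuity theorem is fine, and your side remark that $\int|P_n|^2K_s\,d\theta=1$ (because $h_n\ge 1>s$ kills every off-diagonal term already at $\xi=0$) is a clean way to confirm these are genuinely probability measures.
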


\begin{proof}[Proof of Proposition \ref{w-lim}]{}Since for all $n$ , 
we have $\left\|\P_n\right\|^2_{L^2(\mu_s)}=1$. Therefore, 
the sequence $\left(\left|\left|P_n\right|^2-1\right|\right)_{n\in \N}$ 
is bounded in $L^2\big(\R,\mu_s\big)$, thus admits 
a weakly convergent subsequence. Let us denote by $\phi$ one such weak limit function.
Let $f$ be a bounded continuous function on $\R$. We have
$$
\bigintss 
f\cdot\left|\left|P_n\right|^2-1\right| d\mu_s 
\leq \bigintss f\cdot \left|P_n\right|^2  d\mu_s+\bigintss f\, d\mu_s.$$
By Lemma \ref{Key1} we get
$$
\lim_{n\to+\infty} \bigintss f\cdot\left|\left|P_n\right|^2-1\right| d\mu_s 
\leq 2 \bigintss f\; d\mu_s.
$$
Hence, for any bounded continuous function $f$ on $\R$, we have
$$ 
 \bigintss f\cdot \phi\; d\mu_s 
\leq 2 \bigintss f \;d\mu_s.
$$
which proves that the weak limit $\phi$ is bounded above by 2.
\end{proof}

Let us prove now that the function $\phi$ is bounded by below by a universal positive constant. For that 
we need to prove Proposition \ref{key-sasha}.
Let $n$ be a positive integer and put

\begin{eqnarray*}
\W_n {\stackrel {\rm {def}}{=}}&&  \Big \{ {\sum_{j \in
I}\eta_j \omega_n(j)} ~~:~~
\eta_j \in \{-1,1\}, I \subset \{0,\cdots,p_n-1\}\Big \}.
\end{eqnarray*}

\noindent{} The element $w =\sum_{i \in
I}\eta_j \omega_n(j)$ is called a word.\\
 
We shall need the following two combinatorial lemmas. The first one is a classical result 
in the transcendental number theory and it is due to Hermite-Lindemann.\\

\begin{lemm}[{\bf Hermite-Lindemann, 1882}]\label{Hermite}
 Let $\alpha$ be a non-zero algebraic number. Then, the number $\exp(\alpha)$ is transcendental. 
\end{lemm}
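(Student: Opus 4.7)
The plan is the classical Hermite auxiliary-integral argument by contradiction. Assume $\alpha\neq 0$ is algebraic and $e^\alpha$ is also algebraic. Evaluating the minimal polynomial of $e^\alpha$ and clearing denominators produces rational integers $c_0,c_1,\dots,c_n$ with $c_0\neq 0$ such that
$$\sum_{k=0}^n c_k e^{k\alpha}=0.$$
The goal is to build, for each sufficiently large prime $p$, a rational integer that is simultaneously non-zero and of absolute value less than $1$.

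For a prime $p$, pick a positive integer $d$ with $d\alpha$ an algebraic integer, and set
$$f(x)=d^{np}\,x^{p-1}\prod_{k=1}^n (x-k\alpha)^p,\qquad F(x)=\sum_{j\ge 0}f^{(j)}(x).$$
The Hermite integral $I(z)=\int_0^z e^{z-u}f(u)\,du$ satisfies, by iterated integration by parts, $I(z)=e^zF(0)-F(z)$. Form
$$J=\sum_{k=0}^n c_k I(k\alpha)=F(0)\sum_{k=0}^n c_k e^{k\alpha}-\sum_{k=0}^n c_k F(k\alpha)=-\sum_{k=0}^n c_k F(k\alpha),$$
where the middle sum vanishes by hypothesis.

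The arithmetic analysis rests on two observations. Because $f$ has a zero of order $p$ at each $k\alpha$ for $k\ge 1$, every value $f^{(j)}(k\alpha)$ is an algebraic integer divisible by $p!$, the $d^{np}$ factor ensuring algebraic-integrality of coefficients. At $x=0$, $f^{(j)}(0)=0$ for $j<p-1$, and for $j\ge p$ one again gets an algebraic integer divisible by $p!$; the only exceptional term is
$$f^{(p-1)}(0)=(p-1)!\,(-d^n n!\,\alpha^n)^p.$$
Consequently $J/(p-1)!$ is an algebraic integer in $\mathbb Q(\alpha)$ whose reduction modulo $p$ is $\pm c_0(d^n n!\,\alpha^n)^p$, which is non-zero for every prime $p$ larger than $|c_0|$ and coprime to the norm of $d n!\,\alpha$. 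Taking the field norm $\mathcal J=N_{\mathbb Q(\alpha)/\mathbb Q}(J/(p-1)!)$ then produces a non-zero rational integer, so $|\mathcal J|\ge 1$.

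The analytic estimate is direct: $|I(k\alpha)|\le |k\alpha|\,e^{|k\alpha|}\max_{|u|\le |k\alpha|}|f(u)|\le A^p$ for a constant $A=A(\alpha,n,d)$, hence $|J|/(p-1)!\to 0$; the same bound holds for each Galois conjugate of $J$, giving $|\mathcal J|\to 0$. For $p$ large this contradicts $|\mathcal J|\ge 1$. The main obstacle is the arithmetic step: one must track $p$-adic valuations of the $f^{(j)}(k\alpha)$ precisely and arrange the field-norm symmetrization so that rationality is forced without destroying non-vanishing modulo $p$. Matching this against the analytic decay in $p$ is the classical crux of Hermite--Lindemann.
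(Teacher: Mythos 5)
The paper does not prove this lemma at all: it is quoted as a classical theorem of transcendence theory (the background reference being Waldschmidt \cite{Waldschmidt}), so there is no internal proof to compare yours against; I assess your argument on its own terms.

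Your setup is correct through the identity $J=-\sum_{k}c_kF(k\alpha)$, the divisibility analysis, and the bound $|J|\le A^{p}$, and the argument does close in the case $\alpha\in\mathbb{Q}$ (where it reduces to Hermite's proof that $e$ is transcendental). The gap is the sentence ``the same bound holds for each Galois conjugate of $J$, giving $|\mathcal J|\to 0$.'' For an embedding $\sigma\neq\mathrm{id}$ of $\mathbb{Q}(\alpha)$ one has $\sigma(J)=-\sum_k c_k F_\sigma(k\sigma\alpha)$ with $F_\sigma$ built from $f_\sigma(x)=d^{np}x^{p-1}\prod_k(x-k\sigma\alpha)^p$, and the integral representation gives
\begin{equation*}
\sigma(J)=-F_\sigma(0)\sum_{k=0}^{n}c_k e^{k\sigma\alpha}+\sum_{k=0}^{n}c_k I_\sigma(k\sigma\alpha).
\end{equation*}
The first sum does \emph{not} vanish: the hypothesis kills $\sum_k c_ke^{k\alpha}$, not $\sum_k c_ke^{k\sigma\alpha}$, because $\sigma$ does not commute with the exponential. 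Since $F_\sigma(0)=\sum_{j\ge p-1}j!\,a_j^{\sigma}$ with $j$ ranging up to $(n+1)p-1$, one only gets $|F_\sigma(0)|=O\bigl(((n+1)p)!\,C^{p}\bigr)$, so $|\sigma(J)|/(p-1)!$ grows like a power of $p^{np}$ rather than tending to $0$; the product over all embeddings then does not tend to $0$ when $[\mathbb{Q}(\alpha):\mathbb{Q}]\ge 2$, and no contradiction with $|\mathcal J|\ge 1$ is reached. (Note that ``$J/(p-1)!$ is a nonzero algebraic integer with $|J|/(p-1)!\to0$'' is not by itself a contradiction -- compare $(\sqrt2-1)^p$ -- which is exactly why the norm must be controlled.) The standard repair is to symmetrize \emph{before} introducing the auxiliary polynomial: multiply the vanishing relation over all conjugates of $\alpha$, $\prod_{\tau}\bigl(\sum_k c_k e^{k\tau(\alpha)}\bigr)=0$, expand to obtain an integer-coefficient relation whose exponents form complete conjugate sets (checking, via an ordering of $\C$, that some coefficient survives the expansion), and only then run the Hermite integral with a polynomial vanishing at \emph{all} of these exponents. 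This is the full Lindemann--Weierstrass machinery, and it is precisely the step that makes Hermite--Lindemann harder than the transcendence of $e$; as written, your proof omits it.
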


\noindent{}We state the second lemma as follows.

\begin{lemm} For any $n \in \N^*.$
 All the words of $\W_n$ are distinct.
\end{lemm}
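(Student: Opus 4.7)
The plan is to reduce the distinctness of the words in $\W_n$ to a transcendence statement about $e^{\varepsilon_n/p_n}$, and then invoke Hermite--Lindemann directly.

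First I would suppose for contradiction that two distinct words of $\W_n$ are equal. Subtracting one from the other and collecting like terms in $\omega_n(j)$ gives a relation
\[
\sum_{j=0}^{p_n-1} c_j\,\omega_n(j)=0
\]
where the coefficients $c_j\in\{-2,-1,0,1,2\}$ are not all zero (if the two words involve different sign choices on the same index, the contribution is $\pm 2$; if they involve different index sets, it is $\pm 1$; and any index outside the symmetric difference contributes $0$). Dividing by the common nonzero factor $\dfrac{m_n}{\varepsilon_n^2}p_n$ in the definition of $\omega_n(j)$, this relation becomes
\[
\sum_{j=0}^{p_n-1} c_j\,\exp\!\Bigl(\frac{\varepsilon_n\, j}{p_n}\Bigr)=0.
\]

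Next, I would set $x=\exp(\varepsilon_n/p_n)$ and rewrite the above as $P(x)=0$, where $P(X)=\sum_{j=0}^{p_n-1} c_j X^j\in\Z[X]$ is a nonzero polynomial of degree at most $p_n-1$. This forces $x$ to be an algebraic number.

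Finally, I would observe that $\varepsilon_n$ is a nonzero rational (it appears as $1/\varepsilon_n^2$ in the definition of $\omega_n$), and $p_n$ is a positive integer, so $\alpha=\varepsilon_n/p_n$ is a nonzero rational, hence a nonzero algebraic number. By Lemma~\ref{Hermite} (Hermite--Lindemann), $\exp(\alpha)$ is transcendental, contradicting the conclusion that $x=\exp(\alpha)$ is algebraic. Therefore no two distinct words of $\W_n$ coincide.

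The argument is short; the only delicate point is verifying that the coefficients $c_j$ arising from the difference of two distinct words indeed lie in $\{-2,-1,0,1,2\}$ and are not all zero. This is a direct case analysis on the four possibilities for each index $j$ (whether it lies in $I_1\cap I_2$, $I_1\setminus I_2$, $I_2\setminus I_1$, or neither, and in the first case whether the signs $\eta_j^{(1)}$ and $\eta_j^{(2)}$ agree), so it is routine and does not constitute a real obstacle.
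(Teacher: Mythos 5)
Your proof is correct and follows essentially the same route as the paper's: reduce the equality of two distinct words to a nonzero integer-coefficient polynomial relation satisfied by $e^{\varepsilon_n/p_n}$, note that $\varepsilon_n/p_n$ is a nonzero algebraic number, and contradict the Hermite--Lindemann lemma. You merely spell out the coefficient bookkeeping more explicitly than the paper does.
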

\begin{proof}{}
Let $w,w' \in \W_n$, write
\begin{eqnarray*}
w &=&\sum_{j \in I}\eta_j \omega_n(j),\\
w' &=&\sum_{j \in I'}\eta'_j \omega_n(j).
\end{eqnarray*}
\noindent{}Then $w=w'$ implies
\[
\sum_{j \in I}\eta_j \omega_n(j)-\sum_{j \in I'}\eta'_j \omega_n(j)=0
\]
Hence
\[
\sum_{j \in I}\eta_j \exp(\frac{\varepsilon_n}{p_n}j)-\sum_{j \in I'}\eta'_j \exp(\frac{\varepsilon_n}{p_n}j) =0
\]
But Lemma \ref{Hermite} tell us that $e^{\ds \varepsilon_n/p_n}$ is a transcendental number. This clearly 
forces $I=I'$ and 
the proof of the lemma is complete. 
\end{proof}

\begin{proof}[Proof of Theorem \ref{Gauss}] Let $A$ be a Borel set with
$\mu_s(A)>0$ and notice that for any positive integer $n$, we have
$$\bigintss_{\R}\Big|\frac{\sqrt{2}}{\sqrt{p_n}}
\sum_{j=0}^{p_n-1} \cos(\omega_n(j)t)\Big |^2 d\mu_s(t) \leq 1.$$
Therefore, applying the Helly theorem we may assume that the
sequence $$\Big(\frac{\sqrt{2}}{\sqrt{p_n}}
\sum_{j=0}^{p_n-1} \cos(\omega_n(j)t)\Big)_{n \geq 0}$$ converge in distribution. As is well-known, it is sufficient to show that for every real
number $x$,

\begin{eqnarray}{\label {eq:eq7}}
  \displaystyle \frac1{\mu_s(A)}\bigintss_A \exp\left \{-ix\frac{\sqrt{2}}{\sqrt{p_n}}
  \sum_{j=0}^{p_n-1} \cos(\omega_n(j)t) \right \} d\mu_s(t)
  \tend{n}{\infty} \exp(-\frac{x^2}2). \nonumber
\end{eqnarray}

\noindent{} To this end we apply theorem \ref{Mcleish} in the following
context. The measure space is the given Borel set $A$ of positive 
measure with respect to the probability measure $\mu_s$ on $\R$ equipped with the normalised measure 
$\ds \frac{\mu_s}{\mu_s(A)}$ and the random
variables are given by
$$
X_{nj}=\frac{\sqrt{2}}{\sqrt{p_n}} \cos(\omega_n(j)t),~~~~{\rm {where}}~~~0 \leq j \leq p_n-1,~
n \in \N.
$$

\noindent{}It is easy to check that the variables $\{X_{nj}\}$
satisfy condition (4). Further, condition (3) follows from the fact that
$$
\bigintss_{\R} \Big |\sum_{j=0}^{p_n-1} X_{nj}^2-1 \Big |^2 d\mu_s(t)
\tend{n}{\infty} 0.
$$
\noindent{} It remains to verify conditions (1) and (2) of Theorem \ref{Mcleish}. For this purpose, we set 
\begin{eqnarray*}
\Theta_n(x,t)&=&\Prod_{j=0}^{p_n-1}
\Big(1-ix\frac{\sqrt{2}}{\sqrt{p_n}}\cos(\omega_n(j)t\Big)\\
&=&1+\sum_{w \in W_n}{\rho_w}^{(n)}(x) \cos(wt), \\
\end{eqnarray*}
\noindent{} and
$$ \W_n= \bigcup_{r} \W_n^{(r)},$$
\noindent{}where $\W_n^{(r)}$ is the set of words of length $r$. Hence
\begin{eqnarray*}
\Big |\Theta_n(x,t)\Big|  \leq
\left \{\prod_{j=0}^{p_n-1}\Big ( 1+\frac{2x^2}{p_n}\Big) \right
\}^{\frac12}.
\end{eqnarray*}

\noindent{}But, since $1+u \leq e^u$, we get

\begin{eqnarray}
\Big|\Theta_n(x,t)\Big| \leq e^{x^2}.
\end{eqnarray}

\noindent{}This shows that the condition (1) is satisfied. It still remains to prove     
that the variables $\{X_{nj}\}$
satisfy condition (2). For that, it is sufficient to show that
\begin{eqnarray}\label{eq:eq11}
\bigintss_A \Prod_{j=0}^{p_n-1}\left (
1-ix\frac{\sqrt{2}}{\sqrt{p_n}}\cos(\omega_n(j)t)\right)d\mu_s(t)
\tend{n}{\infty}\mu_s(A).
\end{eqnarray}







\noindent{}Observe that

$$\bigintss_{A} \Theta_n(x,t) d\mu_s(t) =
  \mu_s(A)+\sum_{w \in \W_n}{\rho_w}^{(n)}(x) \bigintss_{A} 
  \cos(wt)d\mu_s(t)$$

\noindent{}and for
$w =\sum_{j=1}^{r}\omega_n(q_j)\in \W_n$, we have

\begin{eqnarray}\label{nicemajoration}
|{\rho_w}^{(n)}(x)| \leq \frac{2^{1-r}|x|^r}{p_n^{\frac{r}2}},
\end{eqnarray}

\noindent{}hence
\[
\max_{w \in \W_n}|{\rho_w}^{(n)}(x)| \leq
\frac{|x|}{p_n^{\frac{1}2}}\tend{n}{\infty}0.
\]
We claim that it is sufficient to prove the following

\begin{eqnarray}{\label{density}}
\bigintss_{\R} \phi \Prod_{j=0}^{p_n-1} \Big
(1-ix\frac{\sqrt{2}}{\sqrt{p_n}}\cos(\omega_n(j)t)\Big) dt
\tend{n}{\infty}\bigintss_{\R} \phi dt,
\end{eqnarray}

\noindent{}for any function $\phi$ with compactly supported
Fourier transforms. Indeed, assume
that (\ref{density}) holds and let $\epsilon>0$.
Then, by the density of the functions with compactly supported
Fourier transforms \cite[p.126]{Katznelson}, one can find a
function $\phi_{\epsilon}$ with compactly supported
Fourier transforms  such that
\[
\Big\|\chi_A.K_s-\phi_\epsilon\Big\|_{L^1(\R)} <\epsilon,
\] 
where
\noindent{}$\chi_A$  is indicator function of $A$.
\noindent{}Hence, according to (\ref{density}) combined with \eqref{eq:eq11}, for $n$
sufficiently large, we have

\begin{eqnarray}{\label{eq :eq12}}
&&\Big |\bigintss_A \Theta_n(x,t) d\mu_s(t) -\mu_s(A)\Big|
=\Big|\bigintss_A\Theta_n(x,t)d\mu_s(t) -\bigintss_{\R} \Theta_n(x,t) \phi_\epsilon(t) dt+
\\&&\bigintss_{\R} \Theta_n(x,t) \phi_\epsilon(t) dt-\bigintss_{\R}  \phi_\epsilon(t) dt + 
\bigintss_{\R}
\phi_\epsilon(t) dt-\mu_s(A)|< e^{x^2}\epsilon+2\epsilon.
\end{eqnarray}

\noindent{}The proof of the claim is complete. It still remains to prove
(\ref{density}). For that, let us compute the cardinality of words of length $r$ 
which can belong to the support of $\phi$.\\ 

\noindent{}By the well-known sampling theorem, we can assume that 
the support of $\phi$ is $[-\Omega,\Omega]$, $\Omega>0$. First, it is easy to check  that 
 for all odd $r$, $|w_n^{(r)}| \tendn +\infty$. It suffices to consider the words with even length. The case
$r=2$ is easy, since it is obvious to obtain the same conclusion. Moreover, as we will see later, 
it is sufficient to consider the case $r=2^k$, $k \geq 2$. We argue that the cardinality of words of length $2^k$, 
$k \geq 2$ which can belong to $[-\Omega,\Omega]$ is less than $\displaystyle \Omega.\frac{p_n^k~{(\log(p_n))}^{k-1}}{m_n~ \varepsilon_n^{k-2}}$. Indeed,
for $k=2$. Write 
$$w_n^{(4)}=\eta_1\omega_n(k_1)+\eta_2\omega_n(k_2)+\eta_3\omega_n(k_3)+\eta_4.\omega_n(k_4),
$$
with $\eta_i \in \{-1,1\}$ and $k_i \in \{0,\cdots,p_n-1\},~~i=1,\cdots,4,$ 
and put 
$$e_n(p)=\exp\big(\frac{\varepsilon_n}{p_n}.p\big),~~p\in \{0,\cdots,p_n-1\}.$$
If $\sum_{i=1}^{4}\eta_i \neq 0$ then there is nothing to prove since $\big|w_n^{(4)}\big| \tendn +\infty$. Therefore, 
let us assume that  $\sum_{i=1}^{4}\eta_i = 0.$  In this case, without loss of generality (WLOG), we will assume that 
$k_1 <k_2<k_3<k_4$. Hence 
\begin{eqnarray*}
w_n^{(4)}&=&\eta_1\omega_n(k_1)+\eta_2\omega_n(k_2)+\eta_3\omega_n(k_3)+\eta_4\omega_n(k_4)\\
&=& \displaystyle \frac{m_n}{\epsilon_n^2}p_n e_n(k_1)\Big(\eta_1+\eta_2e_n(\alpha_1)+
\eta_3e_n(\alpha_2)+\eta_4e_n(\alpha_3)\Big)
\end{eqnarray*}
where, $\alpha_i=k_{i+1}-k_1$, $i=1,\cdots,3.$ At this stage, we may assume again WLOG that 
$\eta_1+\eta_2=0$ and $\eta_3+\eta_4=0$. It follows that
\begin{eqnarray*}
w_n^{(4)}= \displaystyle \frac{m_n}{\epsilon_n^2}p_n e_n(k_1)\Big(\eta_2\big(e_n(\alpha_1)-1\big)+
\eta_4 e_n(\alpha_2)\big(e_n(\alpha'_1)-1\big)\Big) {\rm {~~with~~}} \alpha'_1=\alpha_3-\alpha_2.
\end{eqnarray*}
Consequently, we have two cases to deal with. 
\begin{itemize}
 \item {\bf Case 1:} $\eta_2=\eta_4$. Then 
\begin{eqnarray*}
 \Big| w_n^{(4)} \Big| &\geq& \displaystyle \frac{m_n}{\epsilon_n^2}p_n e_n(k_1) \Big( e_n(\alpha_1)-1 \Big) \\
&\geq& \displaystyle \frac{m_n}{\epsilon_n^2}p_n \frac{\varepsilon_n}{p_n} \tendn +\infty.
\end{eqnarray*}
\item  {\bf Case 2:} $\eta_2=-\eta_4.$ We thus get 
\begin{eqnarray*}
 w_n^{(4)}&=& \displaystyle \frac{m_n}{\epsilon_n^2}p_n.e_n(k_1) \eta_4 \Big(
e_n(\alpha_2)\big(e_n(\alpha'_1)-1\big)-\big(e_n(\alpha_1)-1\big)\Big)
\end{eqnarray*}
Hence 
\begin{eqnarray*}
 \big|w_n^{(4)}\big|= \displaystyle \frac{m_n}{\epsilon_n^2}p_n e_n(k_1)\Big| 
e_n(\alpha_2)\big(e_n(\alpha'_1)-1\big)-\big(e_n(\alpha_1)-1\big)\Big|
\end{eqnarray*}   
Therefore, we have three cases to deal with.
\begin{itemize}
 \item {\bf Case 1:} $\alpha'_1>\alpha_1$. In this case, 
\begin{eqnarray*}
\big|w_n^{(4)}\big| &=& \displaystyle \frac{m_n}{\epsilon_n^2}p_n e_n(k_1)\Big( 
e_n(\alpha_2)\big(e_n(\alpha'_1)-1\big)-\big(e_n(\alpha_1)-1\big)\Big)\\
&\geq& \displaystyle 
\frac{m_n}{\epsilon_n^2}p_n \big(e_n(\alpha'_1)-e_n(\alpha_1)\big)\\
&\geq& \displaystyle 
\frac{m_n}{\epsilon_n^2}p_n \big(e_n(\alpha'_1-\alpha_1)-1\big) \tendn +\infty.
\end{eqnarray*} 
\item {\bf Case 2:} $\alpha'_1<\alpha_1$. Write $\alpha_1=\alpha'_1+\beta$. Thus
\begin{eqnarray*}
\big|w_n^{(4)}\big| &=& \displaystyle \frac{m_n}{\varepsilon_n^2}p_n e_n(k_1) \Big| 
e_n(\alpha_2)\big(e_n(\alpha'_1)-1\big)-\big(e_n(\alpha'_1+\beta)-1\big)\Big|\\
&\geq& \displaystyle 
\frac{m_n}{\varepsilon_n^2}p_n \Big|e_n(\alpha'_1+\alpha_2)-e_n(\alpha_2)-e_n(\alpha'_1+\beta)+1\Big|   \\
&\geq& \frac{m_n}{\varepsilon^2_n}p_n \Big| \big(e_n(\alpha_2)-1\big)\big(e_n(\alpha'_1)-1\big)-
e_n(\alpha'_1)\big(e_n(\beta)-1\big)
\Big|\\
&\geq& \frac{m_n}{\varepsilon^2_n}p_n {\frac{\varepsilon_n\beta \alpha'_1}{p_n}} \Big( 
\frac{e_n(\alpha'_1)\big(e_n(\beta)-1\big)}{\frac{\varepsilon_n\beta \alpha'_1}{p_n}}-
{\frac{\big(e_n(\alpha_2)-1\big)\big(e_n(\alpha'_1)-1\big)}{\frac{\varepsilon_n\beta \alpha'_1}{p_n}}}
\Big) \\
&\geq& \frac{m_n}{\varepsilon_n}\beta \alpha'_1 \Big( 
\frac{e_n(\alpha'_1)\big(e_n(\beta)-1\big)}{\frac{\varepsilon_n\beta \alpha'_1}{p_n}}-
{\frac{\big(e_n(\alpha_2)-1\big)\big(e_n(\alpha'_1)-1\big)}{\frac{\varepsilon_n\beta \alpha'_1}{p_n}}}
\Big) 
\end{eqnarray*}  
\noindent{}But for any $x \in [0,\log(2)[$, we have $ x \leq e^x-1 \leq 2x$. Therefore
$${\frac{\big(e_n(\alpha_2)-1\big)\big(e_n(\alpha'_1)-1\big)}{\frac{\varepsilon_n\beta \alpha'_1}{p_n}}} 
\leq 4 \varepsilon_n.
$$
Hence
$$
\big|w_n^{(4)}\big| \tendn +\infty
$$
\item {\bf Case 3:} $\alpha'_1=\alpha_1$. In this case, we get
$$
\big|w_n^{(4)}\big| \geq \displaystyle \frac{m_n}{\epsilon_n^2}p_n \Big( 
e_n(\alpha_2)-1\Big)\Big(e_n(\alpha_1)-1\Big).
$$   
From this, we have
\begin{eqnarray*} 
\big|w_n^{(4)}\big| &\geq& \displaystyle \frac{m_n}{\epsilon_n^2}p_n.\Big(\frac{\varepsilon_n}{p_n}\Big)^2.
\alpha_2.\alpha_1.\\
&\geq& \displaystyle \frac{m_n}{p_n} \alpha_2.\alpha_1
\end{eqnarray*} 
\end{itemize}
It follows that 
$$\big|w_n^{(4)}\big| \leq \Omega \Longrightarrow \alpha_2.\alpha_1 \leq \frac{p_n}{m_n}.\Omega.$$ 
But the cardinality of $(\alpha_1,\alpha_2)$ such that 
$\displaystyle \alpha_2.\alpha_1 \leq \Omega \frac{p_n}{m_n}$ is less than $\displaystyle \Omega \frac{p_n}{m_n} \log(p_n).$ This gives that the 
cardinality of words of length $4$ which can belong to $[-\Omega,\Omega]$ is less than  
$\displaystyle \Omega \frac{p_n^2}{m_n} \log(p_n).$      
\end{itemize}
Repeated the same argument as above we deduce that the only words to take into account at the stage $k=3$ are the form
$$w_n^{(8)}=\frac{m_n}{\varepsilon^2_n}p_ne_n(\alpha)\Big((e_n(\alpha_1)-1)(e_n(\alpha_2)-1)+\eta.
e_n(\alpha_3)(e_n(\alpha_4)-1)(e_n(\alpha_5)-1)\Big),$$
where $\eta=\pm 1$. In the case $\eta=1$, it is easy to see that 
\begin{eqnarray*}
 \big|w_n^{(8)} \big|\geq \frac{m_n \varepsilon_n}{p^2_n}\alpha_3  \widetilde{\alpha_1} \widetilde{\alpha_2}, 
{\textrm {~~where~~}}  \widetilde{\alpha_i}=\inf(\alpha_i,\alpha_{3+i}).
\end{eqnarray*}
For $\eta=-1$, write
\begin{eqnarray*}
\big|w_n^{(8)}\big|&=&\frac{m_n}{\varepsilon^2_n}p_ne_n(\alpha)\Big|(e_n(\alpha_1)-1)(e_n(\alpha_2)-1)-\\
&&(e_n(\alpha_3)-1)(e_n(\alpha_4)-1)(e_n(\alpha_5)-1)-
(e_n(\alpha_4)-1)(e_n(\alpha_5)-1)\Big|.
\end{eqnarray*}
Using the following expansion 
\begin{eqnarray}\label{eq:eq13}
e_n(x)=1+\frac{\varepsilon_n .x}{p_n}+o(1),
\end{eqnarray}
we obtain, for a large $n$, 
\begin{eqnarray*}
 \big|w_n^{(8)} \big|\geq \frac{m_n \varepsilon_n}{p^2_n}\alpha_3  \widetilde{\alpha_1} \widetilde{\alpha_2}.
\end{eqnarray*}
We deduce that the cardinality of words of length $8$ 
which can belong to $[-\Omega,\Omega]$ is less than   
$$\Omega .p_n.\frac{p^2_n}{m_n \varepsilon_n}\sum_{\widetilde{\alpha_1}, \widetilde{\alpha_2}}
\frac1{\widetilde{\alpha_1} \widetilde{\alpha_2}}
\leq \Omega. \frac{p_n^{3}}{m_n \varepsilon_n} (\log(p_n))^{2} .$$
In the same manner as before consider the words of length $k$ in the following form 
$$w_n^{(2^k)}= \displaystyle \frac{m_n}{\epsilon_n^2}p_ne_n(\alpha)\Big((e_n(\alpha_1)-1) \cdots (e_n(\alpha_k)-1)\Big).$$
Therefore
\begin{eqnarray*} 
\big|w_n^{(2^k)}\big| &\geq& \displaystyle \frac{m_n}{\epsilon_n^2}p_n.\Big(\frac{\varepsilon_n}{p_n}\Big)^k.
\alpha_1.\alpha_2\cdots \alpha_k.\\
&\geq& \displaystyle \frac{m_n}{p_n^{k-1}} \varepsilon^{k-2}_n  \alpha_1.\alpha_2\cdots \alpha_k
\end{eqnarray*}
which yields as above that the cardinality of words of length $2^k$ 
which can belong to $[-\Omega,\Omega]$ is less than   
$$\Omega .p_n.\frac{p_n^{k-1}}{m_n \varepsilon^{k-2}_n}\sum_{\alpha_2,\cdots,\alpha_k}\frac1{\alpha_2\cdots\alpha_k}
\leq \Omega. p_n.\frac{p_n^{k-1}}{m_n \varepsilon^{k-2}_n} (\log(p_n))^{k-1} .$$
Now, if $r$ is any arbitrary even number. Write $r$ in base 2 as 
$$r=2^{l_s}+\cdots+2^{l_1} {\textrm {~~with~~}} l_s > \cdots > l_1 \geq 1.$$ 
and write
$$w_n^{(r)}=w_n^{(2^{l_s})}+\cdots+w_n^{(2^{l_1})},$$
with
$$ 
w_n^{(2^{l_j})}=\eta^{(j)}_1\omega_n(k_1^{(j)})+\cdots+\eta^{(j)}_{2^{l_j}}\omega_n(k_{l_j}^{(j)}),~~j=1,\cdots,s,$$
and 
$$ \sum_{i=1}^{2^{l_j}}\eta^{(j)}_i=0,~~j=1,\cdots,s.$$
Observe that the important case to consider is the case
$$w_n^{(2^{l_j})}=\pm\frac{m_n}{\varepsilon^2_n}p_n\prod_{i=1}^{l_j}\big(e(\alpha_i^{(j)})-1\big), ~j=1,\cdots,s.$$
Using again \eqref{eq:eq13} we obtain for a large $n$ that  
$$\big|w_n^{(r)}\big| \geq \displaystyle \frac{m_n}{p_n^{l_s-1}} \varepsilon^{l_s-2}_n 
\prod_{i=1}^{l_s}\alpha^{(s)}_i. $$
Hence, the cardinality of 
words of length $r$ which can belong to $[-\Omega,\Omega]$ is less than 
$$\Omega.\frac1{m_n}.\frac{p_n^{\lfloor \log_2(r)\rfloor}}{\varepsilon^{\lfloor \log_2(r)\rfloor-2}_n} 
(\log(p_n))^{\lfloor \log_2(r)\rfloor-1},$$
and, in consequence, 
by \eqref{nicemajoration}, we deduce that 
\begin{eqnarray*}
\displaystyle \sum_{w \in\W_n} \Big| {\rho_w}^{(n)}(x)  \int_{\R} e^{-iwt}
\phi(t)dt\Big| \leq \displaystyle \sum_{\overset{w \in\W_n^{(r)}, r {\textrm{~~even~~}}}
{4 \leq r \leq p_n}}\Big| {\rho_w}^{(n)}(x)\int_{\R} \phi(t) e^{-iwt} dt\Big|.
\end{eqnarray*}
Therefore, under the assumption (2), we get 
\begin{eqnarray*}
\displaystyle \sum_{w \in\W_n} \Big| {\rho_w}^{(n)}(x)  \int_{\R} e^{-iwt}
\phi(t)dt\Big| &\leq& \displaystyle \Omega \frac{|x|}{m_n} 
\sum_{\overset{r {\textrm{~~even~~}}}
{4 \leq r \leq p_n}} 
\frac{(\log(p_n))^{\lfloor \log_2(r)\rfloor -1}}{p_n^{\big(\frac{r}{2}-2\lfloor\log_2(r)\rfloor+2\big)}}  
.\frac1{{\big(p_n.\varepsilon_n\big)}^{\lfloor \log_2(r)\rfloor -2}}\\
&\leq& \displaystyle \Omega .|x|.\frac{\log(p_n)}{m_n} 
\sum_{\overset{r {\textrm{~~even~~}}}
{4 \leq r \leq p_n}}
\frac1{p_n^{\big(\frac{r}{2}-2\lfloor\log_2(r)\rfloor+2\big)}}.  
\end{eqnarray*}
The last inequality is due to the fact that for a large $n$ we may assume that 
$\displaystyle\frac{\log(p_n)}{m_n}$ is strictly less than 1. In addition, since $p_n \geq 2$,
$\displaystyle \sum_{\overset{r {\textrm{~~even~~}}}{4 \leq r \leq p_n}} 
\frac1{p_n^{\big(\frac{r}{2}-2\lfloor\log_2(r)\rfloor+2\big)}}$ is convergent. We conclude that 
\[
\Big|\sum_{w \in W_n}{\rho_w}^{(n)}(x) \bigintss_{\R} \phi(t)
  \cos(wt)dt|\leq \Omega .|x|.\frac{\log(p_n)}{m_n} 
\sum_{\overset{r {\textrm{~~even~~}}}
{4 \leq r \leq p_n}} 
\frac1{p_n^{\big(\frac{r}{2}-2\lfloor\log_2(r)\rfloor+2\big)}} \tend{n}{\infty}0.
\]

This finishes the proof, the other case is left to the reader.
\end{proof}
\begin{xrem}Let us point out that if
\begin{eqnarray}\label{factoriel} 
\frac{h_n.\varepsilon_n^k}{p_n^k} \tendn +\infty, {\textrm {~~for~any~}} k \geq 1.
\end{eqnarray}
Then the spectrum of the associated exponential staircase flow is singular. Indeed, Let 
$W_n^{(r)}=\displaystyle\frac{m_n.p_n}{\varepsilon^2_n}\sum_{j=1}^{r}\eta_j e_n(k_j)$ a word of length $r$. 
If $W_n^{(r)} \neq 0$. Then, by Lemma \ref{Hermite}, there exists $\alpha \geq 1$ such that 
$$\sum_{j=1}^{r}\frac{\eta_j k_j^{\alpha}}{\alpha!} \neq 0.$$ 
Therefore, using a Taylor expansion of $e_n$ and assuming that all the terms of degree less than $\alpha$ are 0, we have
\begin{eqnarray*}
|W_n^{(r)}| &=&\frac{m_n.p_n}{\varepsilon_n}\Big|
{\Big(\frac{\varepsilon_n}{p_n}\Big)}^{\alpha}\sum_{j=1}^{r}\frac{\eta_j k_j^{\alpha}}{\alpha!}
+o\Big({\Big(\frac{\varepsilon_n}{p_n}\Big)}^{\alpha}\sum_{j=1}^{r}\frac{\eta_j k_j^{\alpha}}{\alpha!}\Big)\Big|\\
 &\geq& \Big|\frac{h_n.\varepsilon^{\alpha-1}_n}{p_n^{\alpha-1}}\sum_{j=1}^{r}\frac{\eta_j k_j^{\alpha}}{\alpha!}
+o\Big(\frac{h_n.\varepsilon^{\alpha-1}_n}{p_n^{\alpha-1}}\sum_{j=1}^{r}\frac{\eta_j k_j^{\alpha}}{\alpha!}\Big)\Big|
\tendn +\infty.
\end{eqnarray*}
One can apply Lagrangian method to show that $\alpha$ is strictly less than two. Furthermore,
to ensure that the assumption \eqref{factoriel} holds, it suffices to take $\displaystyle p_n=n, \varepsilon_n =\frac1{n^2}$ and
$\displaystyle m_n=\frac{h_n}{n^2}$.  
\end{xrem}
\begin{proof}[Proof of Proposition \ref{Gauss}]
Let $A$ be a Borel subset of $\R$, and $x \in ]1,+\infty[$, then, for any
positive integer $m$, we have

\begin{eqnarray*}{\label{eq :eqfin}}
\bigintss_A \Big|\big|P_m(\theta)\big|^2-1\Big| d\mu_s(\theta) &\geq& \bigintss_{\{\theta
\in A
~~~:~~~~|P_m(\theta)|>x\}}\Big|\big|P_m(\theta)\big|^2-1\Big| d\mu_s(\theta)\\
&\geq&(x^2-1)\mu_s\Big\{\theta \in A~:~|P_m(\theta)|>x \Big\}\\
&\geq& (x^2-1) \mu_s\Big\{\theta \in A ~~:~~|\Re({P_m(\theta)})|>x\Big\}
\end{eqnarray*}
\noindent{} Let $m$ goes to infinity and use Theorem
\ref{Gauss} and Proposition \ref{w-lim} to get

\[
\bigintss_A \phi\; d\mu_s \geq (x-1)\{1-{\mathcal
{N}}([-\sqrt{2}x,\sqrt{2}x])\}\mu_s(A).
\]

\noindent{}Put $K=(x-1)\{1-{\mathcal {N}}([-\sqrt{2}x,\sqrt{2}x])\}$. Hence 
$$
\bigintss_A \phi\; d\mu_s \geq K \mu_s(A),$$ 
\noindent{}for any Borel subset $A$ of $\R$. This end the proof of the proposition.
\end{proof}

\noindent{}Now, we give the proof of our main result.\\

\begin{proof}[Proof of Theorem \ref{main}] Follows easily from the proposition \ref{key-sasha} combined with
proposition \ref{CDsuffit}.
\end{proof}
\begin{xrem}It is shown in \cite{elabdal-archiv} that the spectrum of the Ornstein rank one flows is singular and in 
the forthcoming papers we shall extended the classical results of Klemes \cite{Klemes1} and 
Klemes-Reinhold \cite{Klemes2}. This allow us to ask the following question 

\begin{que}Does any rank one flow have singular spectrum? 
\end{que} 
\end{xrem}

\begin{thank}\em
The author wishes to express his thanks to J-P. Thouvenot, B. Host, Mariusz Lemanczyk, Emmanuel Lesigne, 
T. de la Rue, A. A. Prikhod'ko and Margherita Disertori for fruitful discussions on the subject. He would like 
to express his special thanks to Bassam Fayad for pointing out a gap in the previous version of this paper.
\end{thank}

\end{document}